\newtheorem{theorem}{Theorem}[section]
\newtheorem{lemma}[theorem]{Lemma}
\newtheorem{corollary}[theorem]{Corollary}
\newtheorem{proposition}[theorem]{Proposition}
\theoremstyle{definition}
\newtheorem{definition}[theorem]{Definition}
\newtheorem{example}[theorem]{Example}
\theoremstyle{remark}
\newtheorem{remark}[theorem]{Remark}
\theoremstyle{Conjecture/open problem}
\theoremstyle{assumption}
\newtheorem{assumption}{Assumption}
\newcommand{\1}{\mathbbm{1}}
\newcommand{\N}{\mathbb{N}}
\newcommand{\R}{\mathbb{R}}
\newcommand{\Z}{\mathbb{Z}}
\newcommand{\mF}{\mathcal{F}}
\newcommand{\mG}{\mathcal{G}}
\newcommand{\mN}{\mathcal{N}}
\newcommand{\mV}{\mathcal{V}}
\newcommand{\mE}{\mathcal{E}}
\newcommand{\mU}{\mathcal{U}}
\newcommand{\Eu}{M}
\newcommand{\st}{\,\mid \,}
\newcommand{\lam}{\lambda}
\newcommand{\lamm}{\tau}
\newcommand{\orl}{\overline{r}}
\newcommand{\eps}{\epsilon}
\newcommand{\I}{\mathcal{I}}
\DeclareMathOperator*{\too}{\to}
\renewcommand{\k}{\kappa}
\def\p{^{\prime}}
\newcommand{\mC}{\mathcal{C}}
\newcommand{\mO}{\mathcal{O}}
\newcommand{\mR}{\mathcal{R}}
\newcommand{\mS}{\mathcal{S}}
\newcommand{\supp}{{\rm supp\,}}
\newcommand{\srcsize}{\@setfontsize{\srcsize}{5pt}{5pt}}
\newcommand{\mRa}{\mathcal{R}_{\mathcal{U}}} 
\newcommand{\mRb}{\mathcal{R}_{\mathcal{U}}'} 
\newcommand{\mRu}{(\mathcal{R}_{\mathcal{U}}\cup \mathcal{R}_{\mathcal{U}}')} 
\newcommand{\mRf}{\mathcal{F}} 
\newcommand{\mGuf}{\mathcal{G}_{\mathcal{U},\mathcal{F}}} 
\newcommand{\mEuf}{\mathcal{E}_{\mathcal{U},\mathcal{F}}}
\newcommand{\mVuf}{\mathcal{V}_{\mathcal{U},\mathcal{F}}}
\newcommand{\mWu}{W_{\mathcal{U},\mathcal{F}}} 
\newcommand{\mGr}{(\mathcal{C}_{\mathcal{U},\mathcal{F}},\mathcal{R}_{\mathcal{U},\mathcal{F}})}
\newcommand{\mNuf}{\mathcal{N}_{\mathcal{U},\mathcal{F}}}
\newcommand{\wRn}{\mathcal{R}_{\mathcal{U},\mathcal{F}}} 
\newcommand{\mRfu}{\mathcal{R}_{\mathcal{U},\mathcal{F}}} 
\newcommand{\w}{\Gamma}
\begin{document}

\title[Fast reactions with non-interacting species in SRNs]{Fast reactions with non-interacting species in stochastic reaction networks}
\subjclass[2010]{60J27,60J28,92C42,92B05,92E20}
\keywords{Stochastic reaction networks, mass-action system, Markov process, Continuous-time Markov process, reduction, singular perturbation}


\date{}

\author{Linard Hoessly \and Carsten Wiuf}

\address{Department of Mathematical Sciences, University of Copenhagen, Denmark}
\email{hoessly@math.ku.dk \and wiuf@math.ku.dk}

\begin{abstract}
We consider stochastic reaction networks modeled by continuous-time Markov chains. Such reaction networks often contain many reactions, potentially occurring at different time scales, and have unknown  parameters (kinetic rates, total amounts). This makes their analysis complex. We examine stochastic reaction networks with non-interacting species that often appear in examples of interest (e.g. in the two-substrate Michaelis Menten mechanism). Non-interacting species typically appear as intermediate (or transient) chemical complexes that are  depleted at a fast rate. 
We embed the Markov process of the reaction network into a one-parameter family under a two time-scale approach, such that molecules of non-interacting species are degraded fast. We derive simplified reaction networks where the non-interacting species are eliminated and that approximate the scaled Markov process in the limit as the parameter becomes small.
Then, we derive sufficient conditions for such reductions based on the reaction network structure for both homogeneous and time-varying stochastic settings, and study examples and properties of the reduction.
 
  \end{abstract}
  
  \maketitle

\section{Introduction}

Reaction network  theory  offers a quantitative framework for biochemistry, systems biology, and cellular biology, by enabling the modeling of biological systems. Deterministic models  have been the main focus area with contributions going back more than hundred years \cite{Murray2002}. However, the increasing interest in living systems at the cellular level motivates the use of stochastic models to describe the variation and noise found in   systems with low molecule numbers. Typically, stochastic models are continuous-time Markov chains (CTMCs) on the state space $\Z^n_{\ge0}$, where a state represents the vector of molecule numbers  of the $n$ species in the system.

We are here concerned with the transient behavior of such CTMCs, in contrast to the stationary behavior (the existence of stationary distributions). In practice, variations in molecule numbers and reaction rates might yield phenomena that evolve on different time-scales, enabling simplifications \cite{Weinan}. In particular, we are interested in systems with two time-scales, also known as slow-fast systems, where a set of reactions are fast (in a relative sense) compared to the remaining (slow) reactions. The objective is to approximate, in a mathematically rigorous way, the dynamics of the original CTMC, with another CTMC in smaller dimension (with fewer species). This other CTMC should ideally be interpreted as a  reaction network, obtained by reduction of the original reaction network. We will give conditions for when this can be done.

In the deterministic setting,   the \emph{heuristic} quasi-steady state approximation (QSSA) \cite{segal2} and singular perturbation theory in the sense of Tikhonov-Fenichel \cite{tikh,fenichel} have been the main means to derive lower dimensional reduced models. (See \cite{hta,gwz3} and references therein for the relationship between the two approaches and when the QSSA is valid.)
A motivation for our work is the situation for which reactions involving so-called \emph{non-interacting species} \cite{Variable_el,saez2} in the reactant has reaction rate constants scaled by $1/\epsilon$, where $\epsilon>0$ is a small number \cite{walcher19}. We will consider a \emph{similar} situation for stochastic reaction networks and point out differences between the two settings.

 Many stochastic studies consider  physical or heuristic-based derivations to extract reduced reaction networks, for example, SDEs or hybrid models, or \emph{ad-hoc} reductions  \cite{Schnoerr_2017,othmer2,Gorban_ov}. One stream of research eliminates species using heuristic projection arguments \cite{Janssen2,Janssen3}. Rigorous simplifications and reductions  often follow scaling limits of Markov processes  in a multi-scale setting \cite{approx_kurtz}. These might be applied to concrete examples, if certain conditions are satisfied and the different scaling parameters are  balanced (in a specific sense) \cite{BKPR06,KK13,PP15}. Scaling laws for a special class of reaction networks with \emph{intermediate species} (a special type of non-interacting species)  and their explicit reduction are given in \cite{CW16}. We extends this work to reaction networks with non-interacting species on two time-scales.

These two time-scales separate the transition intensities of the CTMC into a fast and a slow component such that the $Q$-matrix has the following form
$$Q^\eps=\frac{1}{\eps}\widetilde{Q}+\widehat{Q}.$$
The transition intensities of the reactions in the network are divided into two kinds:
 \begin{itemize}
\item Fast reactions with scaled transition intensity $\lam^\eps_{y \to y\p}(x):=\frac{1}{\eps}\lam_{y \to y\p}(x)$. 
\item Slow reactions with unscaled  transition intensity $\lam^\eps_{y \to y\p}(x):=\lam_{y \to y\p}(x)$.
\end{itemize}
The fast reactions are determined by non-interacting species. 
Our work is inspired by previous work on non-interacting species in deterministic systems \cite{Variable_el,saez2,walcher19} and intermediate species in stochastic models \cite{CW16}. The technical part is based on singularly perturbed Markov chains  \cite{yin_zhang_trans,yin2012continuous} and  watched Markov chains \cite{Freedman_a}. In order to derive the limiting dynamics systematically, we associate a graph  to the reaction network that captures the dynamics of the fast reactions. This enables the definition of the reduced reaction network and its dynamics. We give limit theorems for the approximation of  the original CTMC to the  reduced CTMC on compact time intervals. Furthermore, we study the case of time-heterogeneous CTMCs, and show that the same reductions work. 

As an example, consider a \emph{mass-action} reaction network as follows
$$S_1\ce{->[\kappa_1]}U_1+S_2\ce{->[\frac{1}{\eps}\kappa_2]}S_3,\quad U_1\ce{->[\frac{1}{\eps}\kappa_3]}S_4,$$
with two fast reactions determined by the presence of the non-interacting species $U_1$, that degrades fast.
The reduced reaction network is given as
$$
 S_1\ce{->[]}S_3,\quad S_1\ce{->[]}S_2+S_4,
$$
with transition intensities $\frac{\k_1\k_2z_{S_1}(z_{S_2}+1)}{\k_2(z_{S_2}+1)+\k_3}$ for the first reaction and $\frac{\k_1\k_3z_{S_1}}{\k_2(z_{S_2}+1)+\k_3}$ for the second.
After creation of a $U_1$ molecule   (and a $S_2$ molecule), then it might be degraded either by consumption of the $S_2$ molecule, resulting in the net reaction $S_1\ce{->}S_3$, or without consumption of the $S_2$ molecule, resulting in the net reaction $S_1\ce{->}S_2+S_4$. In both cases, the transition intensities reflect that the presence of $S_1$ is required for the reactions to take place.

 We next outline the content, where in $\S$ \ref{notat}  preliminaries on graph theory and reaction networks are covered. In $\S$ \ref{stoch_el}, we introduce non-interacting species and introduce a graph that is used to define the reduced reaction network by elimination of the non-interacting species. In $\S$ \ref{reduced_dynamics}, we study transient approximations for stochastic reaction networks with non-interacting species via the previously introduced reduction. In $\S$ \ref{sectfi}, we give realistic examples, study sufficient conditions for reductions and compare stationary properties of the reduction with the original reaction network. Finally in $\S$ \ref{disc}, we discuss the results, approach, and elaborate on the relation to the literature. In the Appendices $\S$ \ref{additional_vers}, $\S$ \ref{app}, $\S$ \ref{proof_nonhom}, we give proofs as well as brief introductions to the theory on singularly perturbed CTMCs and watched CTMCs.

\section{Preliminaries}\label{notat}
 
\subsection{Notation}
Let $\R^p$ be the real $p$-dimensional space, and $\R^p_{>0}$ ($\R^p_{\geq0}$)  the subset of elements of $\R^p$ with strictly positive (non-negative) entries in all components. A vector $y\in \R^p$ is written as $(y^1,\cdots, y^p)$, where $y^i$ is the $i$-th component.  For vectors $y_1,\cdots , y_k \in \R^p$, $\text{max}(y_1,\cdots,y_k)$ denotes 
the component-wise maximum, and $y_1\ge y_2$ denotes component-wise inequality. The inner product between $y_1$ and $y_2$ is denoted $\langle y_1,y_2 \rangle$. The cardinality of a set $A$ is denoted $|A|$.

\subsection{Graph theory}
 $\mG=(\mV, \mE)$ a directed graph consists of the set of vertices $\mV$ and edge set $\mE$. A directed subgraph of $\mG=(\mV, \mE)$ is a directed graph $\mG\p=(\mV\p, \mE\p)$ with $\mV\p\subseteq \mV, \mE\p\subseteq \mE$ with $ \mE\p$ on $\mV\p$.
A walk is a directed path  $V_{i_1}\rightarrow V_{i_2}\rightarrow \cdots \rightarrow V_{i_{l-1}}\rightarrow V_{i_l}$ (potentially listed as the corresponding sequence of edges).

A multi-digraph $\mG$ is a directed graph where  multiple edges between the same vertices are allowed. In particular, a multi-digraph comes with two functions
\begin{equation*}\label{defmultigr}
s\colon\mathcal{E} \rightarrow \mV\qquad t\colon\mathcal{E} \rightarrow \mV.
\end{equation*}
where the functions 
$s,t$ are the source and target function, respectively.
 Both self-edges (edges $e$ with $t(e)=s(e)$) and parallel edges 
 are possible.

\subsection{Reaction networks (RNs)}

A RN on a finite set $\mS$ is a digraph $\mN=(\mC,\mR)$, where 
 $\mS$ is a finite set of species $\mS=\{S_1,\cdots,S_n\}$, $\mC$ a potentially infinite set of complexes and $\mR$ a potentially infinite set of reactions $\mR=\{r_1,r_2\cdots\}$. 
 Complexes are non-negative linear combinations of species, $y=\sum_{i=1}^ny^i S_i$,  identified with vectors $y=(y^1,\ldots,y^n)$ in $\Z_{\geq 0}^n$. 
 Reactions are directed edges between complexes,  written as $r_i\colon y_i\to y_i\p$ or, generically, as $r\colon y\to y\p $,  potentially omitting $r_i,r$. 
A reaction is said to \emph{consume} the \emph{reactant} $y$ and \emph{create} the \emph{product} $y\p$.  An RN is said to be \emph{finite} if $\mR$ is finite, and otherwise it is \emph{infinite}.

We diverge in two ways from the standard definition of RNs: \emph{trivial} reactions $r\colon y\to y$ (self-loops) are allowed, and the numbers of complexes and reactions are allowed to be infinite. Both extensions are useful when dealing with reduced RNs. From a dynamical point of view, trivial reactions might always be ignored as the dynamics is the same with and without them. Realistic model of bursty gene expression  with infinitely many reactions have been proposed in the literature \cite{CJ20,BA16}. However, our motivation is not to accommodate such examples, but rather to ensure that an RN obtained by reduction of a finite RN is also an RN, finite or infinite. The construction of the reduced RN also holds even if the original RN is infinite. 

\begin{definition}
(i) Two species \emph{interact} if they both appear in a complex of $\mC$. 

(ii) A subset $\mU\subseteq \mS$ is \emph{non-interacting} if it contains no pair of interacting species, and the stoichiometric coefficients of the species in $\mU$ in all complexes are either $0$ or $1$. The species of $\mU$ are said to be non-interacting.

(iii) If $\mU$ is non-interacting, the  species in  $\mS\setminus\mU$ are said to be  \emph{core} species.

\end{definition}  

\begin{example}\label{two_sub_Ing}
Consider a two-substrate Michaelis Menten mechanism \cite[Section 3.1.2]{syst_ing}:
$$r_1\colon E+A\ce{->}EA,\quad r_2\colon EA\ce{->} E+A,\quad r_3\colon EA+B\ce{->}EAB,$$
$$r_4\colon EAB\ce{->} EA+B,\quad r_5\colon EAB\ce{->[]}E+P+Q,$$
where $E$ is an enzyme catalyzing the conversion of two substrates $A, B $ into two  other substrates $P, Q$ by means of transient (or intermediate) steps; here $EA$ and $EAB$ are known as intermediate complexes formed by binding of the molecules in the reactants.
 
This RN has species set $\mS=\{E,A,B,EA,EAB,P,Q\}$ and complex set $\mC=\{E+A,EA,EA+B,EAB,E+P+Q\}$.
The sets  $\mU_1=\{EA,EAB\}$, $\mU_2=\{EA,P\}$ and $\mU_3=\{EAB\}$ are non-interacting. 
\end{example}

In this paper, it is convenient to work with the \emph{directed stoichiometric subspace} of  $\mN=(\mC,\mR)$, defined as 
$$\mathcal{T}=\Big\{\sum_{r\colon y\to y'\in\mR'}\alpha_r( y\p-y)\Big|\alpha_r>0, r\in\mR', \text{ where }\mR'\subseteq\mR\text{ is finite}\Big\}\subseteq\R^n$$ 
(note, this definition allows the RN to be infinite).
For $v\in \R^n$, the set $(v+\mathcal{T})\cap\R^n_{\geq 0}$ defines a \emph{directed stoichiometric compatibility class} of $\mN$.  The RN is said to be \emph{conservative}, respectively,  \emph{sub-conservative}, if there exists a positive vector  $c\in \R^n_{>0}$, such that $\langle c, y'-y\rangle=0$, respectively, $\langle c, y'-y\rangle\le 0$ for any reaction $r\colon y \to y\p\in \mR$.
A sub-conservative RN has compact directed stoichiometric compatibility classes (but not stoichiometric compatibility classes). For a weakly reversible RN, the directed and the undirected stoichiometric subspaces are the same.

\subsection{Stochastic reaction networks (SNRs)}
An SRN is an RN together with a  CTMC $X(t), t\ge 0$, on $\Z_{\geq 0}^n$, modeling the number of molecules of each species over time. A reaction $r\colon y\to y\p$ \emph{fires}  with transition intensity $\lam_r(x)$,  
in which case the state jumps from $X(t)=x$ to $x+y\p-y$ \cite{book_kurtz_crns}.
The Markov process with transition intensities $\lam_r\colon\Z_{\geq 0}^n\to \R_{\geq 0}$, $r\in\mR$, has  $Q$-matrix 
$$Q(x,x+\xi):=\sum_{r\colon y \to y\p\in \mR\colon y\p-y=\xi}\lam_r(x).$$
For (stochastic) mass-action kinetics, the transition intensity for $r\colon y\to y\p$ is 
\begin{equation*}
\label{int}\lam_r(x)=\k_r\tfrac{(x)!}{(x-y)!}\1_{\{x'\colon x'\geq y\}}(x),\quad x\in\Z_{\ge 0}^n,
\end{equation*}
where $x!:=\prod_{i=1}^nx_i!$, 
and  $\k_r$ is a positive reaction rate constant  \cite{book_kurtz_crns}. If there are infinitely many reactions, we assume   
\begin{equation}\label{eq:lambda}
\sum_{r\in\mR}\lam_r(x)<\infty,\quad\text{for all}\quad x\in\Z^n_{\geq 0},
\end{equation}
 such that the corresponding CTMC is well-defined in the sense that it has no instantaneous jumps \cite{norris}. 

When the reactions are indexed, $\mR=\{r_1,r_2,\ldots\}$, we occasionally write $\lambda_i$ and $\kappa_i$, $i=1,2,\ldots$, for convenience.
The following assumption  holds in particular for stochastic mass-action kinetics.

\begin{assumption}\label{ass1}For all reactions,  $r\colon y\to y\p\in\mR $, the transition  intensity 
 $\lam_r\colon \Z_{\geq 0}^n\to\R_{\geq 0}$ 
satisfies the following
$$\lam_r(x)>0\iff x\geq y.$$
\end{assumption}
Under Assumption \ref{ass1}, $(X(t))_{t\ge 0}$, stays in $\Z^n_{\geq 0}$, if $X(0)=x\in\Z^n_{\geq 0}$. In particular, $(X(t))_{t\ge 0}$, is confined  to the directed stoichiometric compatibility class  $(x+\mathcal{T})\cap\Z^n_{\geq 0}$. 
 Assumption \ref{ass1} is fundamental and enforces the reactions to be compatible with the transition intensities of the CTMC. 

\section{Elimination of non-interacting species through fast reactions}\label{stoch_el}

\subsection{Notation for non-interacting species}

In the following, we focus on RNs and SRNs with non-interacting species.
To fix notation, let $\mU\subseteq \mS$ be a non-interacting subset of species. For simplicity, we let $\mU=\{U_1,\dots ,U_m\}$ and  $\mO=\mS\setminus \mU=\{S_1,\dots,S_p\}$  ($p=n-m$), such that $\mS=\mO\cup \mU=\{S_1,\dots,S_p, U_1,\dots,U_m\}$. We let 
\begin{equation*}
\label{projections}
\qquad \rho_{\mO}\colon\R^n\rightarrow \R^p,
\qquad \rho_\mU\colon\R^n\rightarrow \R^m
\end{equation*}
be the projections onto the first $p$ coordinates and  last $m$ coordinates of $\R^n$, respectively.  Consequently, we denote a state by $x=(z,u)\in\Z^p_{\ge0}\times\Z^m_{\ge0}=\Z^n_{\ge0}$.

Define the following sets of reactions
$$ \mRa=\{r\colon y \to y\p\in \mR \st   \rho_\mU(y)\neq 0\},\quad \mRb=\{r\colon y \to y\p\in \mR \st   \rho_\mU(y\p)\neq 0\},$$
such that $\mRa\cup  \mRb$ are the reactions that involve species in $\mU$.

We consider a subset of \emph{fast} (a terminology to be motivated below) reactions $\mRf\subseteq \mRa$ with the following structural property.

\begin{definition}\label{ass0}
The reactions in $\mRf$ are \emph{proper} w.r.t. $\mU$, that is, any non-interacting species is part of a sequence of reactions in $(\mRb\setminus \mRa)\cup \mRf$ of the  form
$$r_{i_0}\colon y_{i_0}\to y_{i_0}\p,\quad r_{i_1}\colon y_{i_1} \to y_{i_1}\p, \quad \cdots\quad r_{i_l}\colon y_{i_l} \to y_{i_l}\p,$$
with $i_0,\ldots,i_l\in\{1,\ldots,k\}$,  $\rho_\mU(y_{i_0})=\rho_\mU(y_{i_l}\p)=0$ and $\rho_\mU(y_{i_j})=\rho_\mU(y_{i_{j-1}}\p)\neq 0$ for $j=1,\cdots ,l$.
Such a sequence of reactions is called a \emph{fast} chain.
\end{definition}

Hence, any molecule of a non-interacting species can be degraded through a sequence of fast reactions (provided sufficient molecule numbers of core species).

\begin{example} Recall Example \ref{two_sub_Ing}. The set   $\mU_1=\{EA,EAB\}$ is a non-interacting set and  $\mRf=\mRa$ is a set of fast proper reactions. For example, the following is a fast chain:
$$r_1\colon E+A\to EA,\quad r_3\colon EA+B\to EAB,\quad r_5\colon EAB\to E+P+Q.$$
Also, $\mU_2=\{EA,P\}$ is a non-interacting set of species, but   $\mRf=\mRa$ is not a set of fast proper reactions, as there are no fast reactions with $P$ in the reactant.
\end{example}

\subsection{The reduced reaction network}\label{subs_red_stoch}

We introduce a labeled multi-digraph to capture the conversion and creation of non-interacting species through fast chains. This graph is similar to the one introduced  in \cite{saez2}. We later use this graph to define a reduced RN and a reduced SRN by elimination of non-interacting species. 

\begin{definition}\label{stochgraph} 
Let $\mN=(\mC,\mR)$ be an RN on $\mS$, $\mU\subseteq \mS$  a set of non-interacting species, and  $\mRf\subseteq\mR_\mU$ a set of  fast proper reactions. Let $\mGuf=(\mVuf, \mEuf)$ be the  labeled multi-digraph with vertex set
$$\mVuf:=\{*_{in},*_{out}\}\cup\{U_i\st U_i\in\mU\},$$
and edge set 
\begin{align*} 
\mEuf&:=\{*_{in}\ce{->[r]}U_i\st r\colon y \to y\p\in \mRb\setminus\mRa, \rho_\mU(y\p)=U_i\}\bigcup\\
&\quad \{U_i\ce{->[r]}U_j, \st r\colon y \to y\p\in\mF\text{ such that }\rho_\mU(y)=U_i, \rho_\mU(y_\gamma\p)=U_j\}\bigcup\\
&\quad \{U_i\ce{->[r]}*_{out}, \st  r\colon y \to y\p\in \mRf \text{ such that }\rho_\mU(y)=U_i, \rho_\mU(y\p)= 0\}.
\end{align*}
Furthermore, let $L\colon \mEuf\to \{1,\ldots\}$ be the function that maps  $\gamma\in\mEuf$ to the corresponding index of the reaction label, that is, if $\gamma\in\mEuf$ has label  $r_i$, then $L(\gamma)=i$.
\end{definition}

The construction of the graph $\mGuf$ also makes sense in the case of an infinite RN. The vertex set is  finite, but the number of edges between any two vertices might be infinite.

\begin{example}\label{cont_ex}
Consider Example \ref{two_sub_Ing} with
 $\mU=\mU_1=\{EA,EAB\}$ and
 $\mRf=\mRa$.
Then $\mGuf$ is

\begin{center}
\begin{tikzpicture}
    \node (p1) at ( 0, 0) {$\ast_{in}$}; 
    \node (p2) at ( 2, 0) {$EA$};
    \node (p3) at ( 4.3,0) {$\ast_{out}$};
    \node (p6) at ( 2,-1) {$EAB$};
    
    \begin{scope}[every path/.style={->}]
       \draw (p1) -- (p2) node[midway,above]{$r_1$};
        \draw (p2) -- (p3) node[midway,above]{$r_2$}; 
        
         \draw[->] (p2) to[out=-160,in=160] node[left]{$r_3$}(p6);   
        \draw[->] (p6) to[out=20,in=-20] node[right]{$r_4$}(p2);   

        \draw (p6) to[out=0,in=-90] node[midway,above]{$r_5$} (p3);        
    \end{scope}  
\end{tikzpicture}.
\end{center}
\normalsize
There are infinitely many walks from $\ast_{in}$ to $\ast_{out}$, as one might take arbitrary many `rounds' in the loop before exiting.
\color{black}
\end{example}

By definition, any \emph{finite} sequence of reactions that corresponds to the reaction labels of a walk in $\mGuf$ with start vertex $\ast_{in}$ and end vertex $\ast_{out}$ is a fast chain. 
Define the set of such walks by
\begin{align*}\mWu&:=\{(\gamma_1,\cdots , \gamma_l)\in(\mEuf)^l \st l\ge 2, (\gamma_1,\cdots , \gamma_l)\text{ is a walk } \\  & \qquad \text{  in }\mGuf \text{ with } 
 s(\gamma_1)= *_{in}, \ t(\gamma_l)=*_{out}\}.
\end{align*}
 Note that $\mWu$ might be infinite, even for finite RNs,  as in Example \ref{cont_ex} above.

We will consider the situation in which the reactions of $\mF$ occur  fast compared to the remaining reactions $\mR\setminus\mF$, in the sense that the transition intensities of the reactions in $\mF$ are scaled by $1/ \epsilon$ for small $\epsilon$. Thus, we consider a fast-slow dynamic regime. In that case, it is natural to expect that whenever a non-interacting molecule is created, then it will be degraded almost instantaneously through fast reactions, before any other non-fast reaction occurs. Such sequences of reactions (creation and degradation) are encoded in the walks of $\mWu$. To understand the fast dynamics, it is therefore important to understand the net gain of core species in the walks and their probabilities of occurring. 

In preparation for this, consider a walk
 $\Gamma=(\gamma_1,\gamma_2,\ldots,\gamma_l)\in \mWu$, 
and denote
$$w_i:=y_{L(\gamma_i)}+\sum_{j=1}^{i-1}y_{L(\gamma_j)}-y_{L(\gamma_j)}',\quad i=1,\ldots,l,$$
where $r_{L(\gamma_j)}$ is the reaction label, and  $L(\gamma_j)$ the reaction index, of the edge $\gamma_j$. Define
\begin{equation*}\label{red_net}
\mathrm{r}(\w):= \max(w_1,\ldots ,w_l).
\end{equation*}
Note that $\mathrm{r}(\w)$ depends on the order of the elements of $\Gamma$.

\begin{lemma}\label{only_core}
Let  $\w=(\gamma_1,\gamma_2,\ldots,\gamma_l)\in \mWu.$ 
Then the following holds:
\begin{itemize}
\item$\mathrm{r}(\w)\ge 0$, and $\mathrm{p}(\w):=\mathrm{r}(\w)+\sum_{i=1}^ly_{L(\gamma_i)}-y_{L(\gamma_i)}'\ge 0$ 
\item $p_\mU(\mathrm{r}(\w))=0$, $p_\mU(\mathrm{p}(\w))=0$.
\end{itemize}
\begin{proof}
We give complete proofs here for convenience, but note that the results also follow from \cite{hoessly2021sum}.
The second item follows by definition. As we take the maximum  coordinate-wise and $w_1\ge 0$, then $\mathrm{r}(\w)$ has non-negative coordinate in each species. 
To see that $\mathrm{p}(\w)$ has non-negative coordinate in each species, we note that by definition $y_{L(\gamma_l)}+\sum_{i=1}^{l-1}y_{L(\gamma_i)}-y_{L(\gamma_i)}'\leq \mathrm{r}(\w)$. Adding $\sum_{i=1}^ly_{L(\gamma_i)}'-y_{L(\gamma_i)}$ to both sides, we get $0\le y_{L(\gamma_l)}'\leq \mathrm{p}(\w)$, and the result follows.
\end{proof}
\end{lemma}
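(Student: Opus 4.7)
The plan is to split Lemma \ref{only_core} into two independent pieces---component-wise non-negativity of $\mathrm{r}(\w)$ and $\mathrm{p}(\w)$, and the vanishing of their $\rho_\mU$-projections---and handle each with a different tool. Non-negativity reduces to elementary properties of the component-wise maximum plus a small algebraic identity, while the projection claims are purely combinatorial and exploit the walk structure in $\mGuf$.

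For non-negativity, I would first observe that $w_1=y_{L(\gamma_1)}$ is a complex and hence lies in $\Z^n_{\ge 0}$; since $\mathrm{r}(\w)$ is the component-wise maximum of $w_1,\ldots,w_l$, this yields $\mathrm{r}(\w)\ge w_1\ge 0$. For $\mathrm{p}(\w)\ge 0$, the cleanest route is the identity
\[
\mathrm{p}(\w)\;=\;\bigl(\mathrm{r}(\w)-w_l\bigr)+y'_{L(\gamma_l)},
\]
verified by unpacking definitions on both sides. Since $w_l$ is one of the entries of the max defining $\mathrm{r}(\w)$, the first summand is non-negative; and $y'_{L(\gamma_l)}\ge 0$ as a complex.

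For the vanishing of $\rho_\mU$, the argument uses only Definition \ref{stochgraph}. The walk $\w$ traces out a vertex sequence $v_0=\ast_{in},v_1,\ldots,v_{l-1},v_l=\ast_{out}$ in $\mGuf$, with every interior vertex some $U_{k_i}\in\mU$. Writing $e_v$ for the indicator in $\R^m$ of $v$, under the convention $e_{\ast_{in}}=e_{\ast_{out}}=0$, the edge-set definition forces $\rho_\mU(y_{L(\gamma_i)})=e_{v_{i-1}}$ and $\rho_\mU(y'_{L(\gamma_i)})=e_{v_i}$ for each $i$. A short telescoping then gives
\[
\rho_\mU(w_i)\;=\;e_{v_{i-1}}+\sum_{j=1}^{i-1}\bigl(e_{v_{j-1}}-e_{v_j}\bigr)\;=\;e_{v_{i-1}}+e_{v_0}-e_{v_{i-1}}\;=\;0
\]
for every $i$, so $\rho_\mU(\mathrm{r}(\w))=0$ by component-wise maximality. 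Applying $\rho_\mU$ to the definition of $\mathrm{p}(\w)$ and telescoping the full sum gives $\rho_\mU(\mathrm{p}(\w))=0+(e_{v_l}-e_{v_0})=0$.

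The main obstacle, if one can call it that, is really bookkeeping: pinning down the dictionary between vertices of $\mGuf$ and $\mU$-supports dictated by Definition \ref{stochgraph} so that both telescoping identities close cleanly. No deeper ingredient is required.
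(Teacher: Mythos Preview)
Your proof is correct and essentially matches the paper's argument. For non-negativity of $\mathrm{p}(\w)$, your identity $\mathrm{p}(\w)=(\mathrm{r}(\w)-w_l)+y'_{L(\gamma_l)}$ is exactly the paper's inequality $w_l\le \mathrm{r}(\w)$ rearranged; for the $\rho_\mU$-vanishing, the paper simply says ``follows by definition'' while you spell out the telescoping explicitly, which is the same content with more bookkeeping made visible.
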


By Lemma \ref{only_core}, 
we might consider $\mathrm{r}(\w)$ and $\mathrm{p}(\w)$ as elements in $\Z_{\geq 0}^p$, and  $\mathrm{r}(\w)\to \mathrm{p}(\w)$ as the \emph{reduced reaction} obtained by contraction along $\Gamma$. Lemma \ref{non-neg_min} below justifies this view, and relates the compatibility requirement in Assumption \ref{ass1} to the reduced reactions.

\begin{example}\label{ex:MM3}
We continue with Example \ref{cont_ex}. Consider the walk $\w_0$ of $\mG_{\mU,\mF}$ consisting of the two edges associated to $r_1,r_2$. Then
 $$\mathrm{r}(\w_0)\to \mathrm{p}(\w_0)=E+A\to E+A,$$
a trivial reaction.
Likewise, the walk $\w^A_1$ consisting of the edges associated to $r_1,r_3,r_4,r_2$, also results in a trivial reaction,
$$\mathrm{r}(\w^A_1)\to \mathrm{p}(\w^B_1)=E+A+B\to E+A+B.$$
 In contrast, the walk $\w^B_1$ consisting of the three edges associated to $r_1,r_3$ and $r_5$ yields
$$\mathrm{r}(\w^B_1)\to \mathrm{p}(\w^A_1)=E+A+B\to E+A+Q.$$

Any other walk of $\mWu$ consists   either of a sequence of edges with labels  $r_1, r_3, r_4,\ldots,$ $r_3,r_4,r_3,r_5$ or a sequence of edges with labels $r_1, r_3, r_4,\ldots,r_3,r_4,r_2$. As the net gain of core species in the contraction of the reactions $r_3, r_4$ is zero, the corresponding reduced reactions are the same as that of $\w^A_1$, respectively, $\w^B_1$. Hence, there is only one non-trivial reduced reaction. For future reference, denote the corresponding walks with $n$ instances of $r_3$ by $\w^A_n$, respectively,  $\w^B_n$, for $n\ge 1$.
\end{example}

\begin{lemma}\label{non-neg_min}
Suppose Assumption \ref{ass1} holds. Let
$\w=(\gamma_1,\gamma_1,\ldots,\gamma_l)\in \mWu$. Furthermore, let $z\in\Z_{\geq 0}^p$ and  $x=(z,0)\in\Z^n_{\geq0}$. Then
$$\lambda_{L(\gamma_j)}\!\!\left(x+\sum_{i=1}^{j-1}y_{L(\gamma_i)}'-y_{L(\gamma_i)}\right)> 0,\quad j=1,\cdots, l-1,$$
 if and only if $z\geq \mathrm{r}(\w)$.
\end{lemma}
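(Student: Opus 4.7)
The plan is to unpack the positivity condition using Assumption~\ref{ass1}, translate each per-step positivity into a componentwise inequality $x \geq w_j$, and then collect these inequalities into the single condition $z \geq \mathrm{r}(\w)$. Throughout I will use the structural telescoping property of a walk in $\mGuf$ to show that the non-interacting part of each $w_j$ vanishes, so that the inequalities can be pulled back to the core projection.

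First, I would apply Assumption~\ref{ass1} to rewrite the condition $\lambda_{L(\gamma_j)}(x+\sum_{i=1}^{j-1}(y_{L(\gamma_i)}'-y_{L(\gamma_i)}))>0$ as $x+\sum_{i=1}^{j-1}(y_{L(\gamma_i)}'-y_{L(\gamma_i)}) \geq y_{L(\gamma_j)}$, which rearranges to $x\geq w_j$. Next, I would exploit the definition of $\mGuf$: consecutive edges of $\w$ share a non-interacting vertex, meaning $\rho_\mU(y_{L(\gamma_i)}') = \rho_\mU(y_{L(\gamma_{i+1})})$ for each $i=1,\ldots,l-1$, while the first edge leaves $\ast_{in}$, so $\rho_\mU(y_{L(\gamma_1)})=0$. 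A short telescoping computation then gives
\begin{equation*}
\rho_\mU\!\left(\sum_{i=1}^{j-1}(y_{L(\gamma_i)} - y_{L(\gamma_i)}')\right) = \rho_\mU(y_{L(\gamma_1)}) - \rho_\mU(y_{L(\gamma_j)}) = -\rho_\mU(y_{L(\gamma_j)}),
\end{equation*}
so that $\rho_\mU(w_j) = 0$ for every $j$. Since $x=(z,0)$ has no molecules of non-interacting species, the condition $x\geq w_j$ is equivalent to $z \geq \rho_\mO(w_j)$.

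Finally, I would combine these: requiring positivity for all indices is equivalent to requiring $z \geq \rho_\mO(w_j)$ for each such $j$, hence to $z \geq \max_j \rho_\mO(w_j)$. By the definition of $\mathrm{r}(\w)$ and Lemma~\ref{only_core} (which identifies $\mathrm{r}(\w)$ with an element of $\Z^p_{\geq 0}$ since $\rho_\mU(\mathrm{r}(\w))=0$), this is exactly $z \geq \mathrm{r}(\w)$. The main point of care is the telescoping step: it must be justified by the walk structure in $\mGuf$ rather than by any assumption on the particular reactions, so the bookkeeping of which vertex each edge touches has to be handled explicitly; once that vanishing of $\rho_\mU(w_j)$ is in hand, the remainder of the argument is a direct componentwise comparison.
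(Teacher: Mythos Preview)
Your argument is correct and is the natural direct proof: Assumption~\ref{ass1} converts each positivity condition into $x\geq w_j$, the telescoping identity $\rho_\mU(y_{L(\gamma_i)}')=\rho_\mU(y_{L(\gamma_{i+1})})$ along the walk forces $\rho_\mU(w_j)=0$, and then the conjunction over $j$ collapses to $z\geq\mathrm{r}(\w)$. The paper itself does not supply a proof of this lemma; it only remarks afterward that a similar statement appears as \cite[Corollary~3.2]{hoessly2021sum}, so there is no in-paper argument to compare against.

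Two small points worth tightening. First, the displayed range $j=1,\ldots,l-1$ in the lemma is almost certainly a typo for $j=1,\ldots,l$: the definition $\mathrm{r}(\w)=\max(w_1,\ldots,w_l)$ includes $w_l$, and simple examples (e.g.\ $0\to U_1$, $U_1+S_1\to 0$) show the equivalence fails if $j=l$ is omitted. Your write-up handles ``all indices'' without pinning down the range, which happens to match the intended statement; just be explicit. Second, Assumption~\ref{ass1} is stated for $\lambda_r\colon\Z^n_{\geq0}\to\R_{\geq0}$, so strictly speaking you should note that the argument $x+\sum_{i<j}(y_{L(\gamma_i)}'-y_{L(\gamma_i)})$ lies in $\Z^n_{\geq0}$ before invoking the assumption. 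In the forward direction this follows inductively from the preceding positivity; in the backward direction it follows from $x\geq w_j$ since then $x-w_j+y_{L(\gamma_j)}\geq y_{L(\gamma_j)}\geq0$.
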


The lemma implies that the reactions corresponding to a walk in $\mWu$ can fire in succession of each other (without other non-fast reactions firing in between), if and only if the present  molecule counts of core species is larger or equal to $\mathrm{r}(\w)$. A similar statement appears in  \cite[Corollary 3.2]{hoessly2021sum}.

Lemma \ref{non-neg_min} allows us to define transition intensities of the reduced reactions in a natural way.  For $\w=(\gamma_1,\gamma_2,\ldots,\gamma_l)\in \mWu$, define the function $\Lambda_{\w}\colon\Z_{\geq 0}^p\to \R_{\geq0}$,
\begin{equation}\label{eq:bigLambda}
\Lambda_{\w}(z):=\lam_{L(\gamma_1)}(z,0)\prod_{j=2}^{l}\frac{\lam_{L(\gamma_j)}((z,0)+\sum_{i=1}^{j-1}\xi_{i})}{\sum_{\tilde{\gamma}\in \text{out}(s(\gamma_{j}))} \lam_{L(\tilde{\gamma})}((z,0)+\sum_{i=1}^{j-1}\xi_{i}) },
\end{equation}
where $\xi_{i}:=y_{L(\gamma_i)}'-y_{L(\gamma_i)}$, and $\text{out}(s(\gamma_{j}))$ denotes the set of outgoing edges of $s(\gamma_{j})$ in $\mGuf$.  Each term in the product is the probability that the desired reaction is chosen out of all possible fast reactions with the same non-interacting species in the reactant. The convention $0/0=0$ is used. Even if the RN is infinite, then \eqref{eq:bigLambda} is well-defined due to \eqref{eq:lambda}.

 \begin{definition}
Let $\mN=(\mC,\mR)$ be an RN on $\mS$,  let $\mU\subseteq\mS$  be a non-interacting set of species, and $\mF\subseteq\mR_\mU$ a set of proper fast reactions.
The \emph{reduced RN} $\mNuf=\mGr$ on $\mO=\mS\setminus\mU$, obtained by elimination of $(\mU,\mF)$ from $\mN$,   
is the possibly infinite RN defined by
$$ \mRfu=(\mR\setminus \mRu)\cup \{\mathrm{r}(\w)\to\mathrm{p}(\w)\st \w\in \mWu\},$$
and $\mC_{\mU,\mF}$ being the set of  vertices of $\mR_{\mU,\mF}$.
  \end{definition}

For $r\in\mR_{\mU,\mF}$,  define
 $$\mWu(r):=\{\w\in \mWu\st \mathrm{r}(\w)\to \mathrm{p}(\w)=r\}.$$

\begin{definition} \label{defredSnet}
Let $\mN=(\mC,\mR)$ be an SRN on $\mS$ with transition intensities  $\lam_r(\cdot),r\in \mR$, satisfying Assumption \ref{ass1}. Let $\mU\subseteq\mS$ be a set of non-interacting species, and $\mF\subseteq\mR_\mU$ a set of proper fast reactions.
Then, the possibly infinite SRN $\mNuf=\mGr$ on $\mO\setminus\mU$ with transition intensities $\lamm_r(\cdot), r\in \mRfu$, given by
$$\lamm_{r}(z):=\begin{cases}
\sum\limits_{\w\in \mWu(r)}
\Lambda_{\w}(z) & \text{ if $r\not\in \mR\setminus \mRu$},\\
  \lam_r(z,0)+\sum\limits_{\w\in \mWu(r)}
  \Lambda_{\w}(z) & \text{ if $r\in \mR\setminus \mRu$},\\
\lam_r(z,0) & \text{ if $r\in \mR\setminus \mRu$ and }\\
&\qquad r \not\in \{\mathrm{r}(\w)\to\mathrm{p}(\w)\st \w\in \mWu\},  
\end{cases}$$
is the \emph{reduced SRN}, obtained by elimination of $(\mU,\mF)$ from $\mN$. 
\end{definition}

As a reduced SRN has species set $\mO=\mS\setminus\mU$, the associated CTMC lives in $\Z_{\geq 0}^p$.
Even if the reduced SRN has infinitely many reactions, the associated CTMC is always well-defined, that is, it has no instantaneous states \cite{norris}. As remarked earlier, one might discard any trivial reaction. However, the transition intensities of the trivial reactions play a crucial role in Assumption \ref{ass2} in Section \ref{lim_stand}.

\begin{lemma}
Suppose Assumption \ref{ass1} holds. Then, the reduced SRN has  Q-matrix with all off-diagonal row sums finite.
\end{lemma}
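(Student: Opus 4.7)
The strategy is to show that for every $z\in\Z^p_{\geq 0}$ the total outgoing rate $\sum_{r\in\mRfu}\lamm_r(z)$ is finite, which immediately bounds each off-diagonal row sum of the $Q$-matrix at state $z$. Using Definition \ref{defredSnet}, in every case $\lamm_r(z)\leq\lam_r(z,0)\1_{\{r\in\mR\setminus\mRu\}}+\sum_{\w\in\mWu(r)}\Lambda_\w(z)$. Since the sets $\mWu(r)$ partition $\mWu$ over $r\in\mRfu$, summing yields
$$\sum_{r\in\mRfu}\lamm_r(z)\leq\sum_{r\in\mR\setminus\mRu}\lam_r(z,0)+\sum_{\w\in\mWu}\Lambda_\w(z).$$
The first summand is finite by \eqref{eq:lambda} applied to the original SRN at $(z,0)\in\Z^n_{\geq 0}$, so the task reduces to bounding the second.

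Next, I would group walks $\w=(\gamma_1,\ldots,\gamma_l)\in\mWu$ by their first edge $\gamma_1$ (an outgoing edge of $\ast_{in}$ in $\mGuf$), so that \eqref{eq:bigLambda} rewrites as $\sum_{\w\in\mWu}\Lambda_\w(z)=\sum_{\gamma_1}\lam_{L(\gamma_1)}(z,0)\,S(\gamma_1;z)$, where $S(\gamma_1;z)$ denotes the sum over all walks extending $\gamma_1$ of the remaining product of fractions in \eqref{eq:bigLambda}. Each such fraction is a conditional probability: at any vertex $U_i$ of $\mGuf$ these fractions sum to $1$ over the outgoing edges of $U_i$ (or they all vanish, in which case the convention $0/0=0$ assigns weight $0$). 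Consequently, $S(\gamma_1;z)$ equals the absorption probability at $\ast_{out}$ of a discrete-time chain on the vertices of $\mGuf$ starting at $t(\gamma_1)$, whose transition probabilities at step $j$ depend on the accumulated state $(z,0)+\sum_{i=1}^{j-1}\xi_i$, so $S(\gamma_1;z)\leq 1$. Combining the bounds,
$$\sum_{\w\in\mWu}\Lambda_\w(z)\leq\sum_{\gamma_1}\lam_{L(\gamma_1)}(z,0)=\sum_{r\in\mRb\setminus\mRa}\lam_r(z,0)\leq\sum_{r\in\mR}\lam_r(z,0)<\infty,$$
by \eqref{eq:lambda} once more, which completes the argument.

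The hard part to write carefully will be the probabilistic interpretation yielding $S(\gamma_1;z)\leq 1$, since the factors depend on the full history of edges taken (through the accumulated vector $\sum\xi_i$), so the chain is not naturally homogeneous on $\mVuf$ alone. The cleanest route is to augment the state space to $\mVuf\times\Z^n$, where the extra coordinate tracks the accumulated vector, producing a genuine time-homogeneous Markov chain with absorbing set $\{\ast_{out}\}\times\Z^n$; the inequality is then immediate from the elementary fact that probabilities of disjoint absorbing paths in such a chain sum to at most $1$.
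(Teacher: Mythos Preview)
Your proposal is correct and follows essentially the same approach as the paper: split the total rate into the contribution from $\mR\setminus\mRu$ and the contribution from walks, then bound $\sum_{\w\in\mWu}\Lambda_\w(z)\leq\sum_{r\in\mRb\setminus\mRa}\lam_r(z,0)$. The paper simply asserts this last inequality directly from the product form of $\Lambda_\w$, while you justify it via the absorption-probability interpretation on an augmented state space; your extra detail is sound but not strictly necessary, since each factor in the product of \eqref{eq:bigLambda} is visibly a term divided by a sum containing it, so summing over all continuations at each step gives at most~$1$.
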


\begin{proof}
By definition, we need to show that 
$$ \sum\limits_{r\in \mRfu}\lamm_r(z)<\infty.$$ 
For this, we note that
$$ \sum\limits_{r\in \mRfu}\lamm_r(z)=\sum\limits_{r\in \mR\setminus \mRu} \lam_r(z,0)+\sum\limits_{\w\in \mWu}
  \Lambda_{\w}(z).$$ 
Hence, it is enough to show that the second summand in the second term is finite as the first is finite by assumption. 
By construction of the reduced transition intensities,
$$\sum\limits_{\w\in \mWu}\Lambda_{\w}(z)\leq \sum\limits_{r\in \mRb\setminus \mRa}\lam_{r}(z,0)<\infty.$$
\end{proof}

We note that the last inequality is not necessarily an equality, see Example \ref{changedMM}.
 As a consequence of Lemma \ref{non-neg_min}, Assumption \ref{ass1} holds for a reduced SRN, provided it holds for the original SRN. 

\begin{corollary}\label{comp_cor}
Suppose Assumption \ref{ass1} holds. Then, for any reaction $r:y\to y\p\in \wRn$ of the reduced SRN, it holds that $\lamm_{r}(z)> 0$ if and only if $z\geq y$.
\end{corollary}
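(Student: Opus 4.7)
The plan is to split $r\colon y\to y'\in\wRn$ according to the three branches of Definition \ref{defredSnet} and to reduce each branch to either a direct application of Assumption \ref{ass1} or to Lemma \ref{non-neg_min}.

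In the pure ``slow'' branch, where $r\in\mR\setminus\mRu$ and $r$ is not obtainable as $\mathrm{r}(\w)\to\mathrm{p}(\w)$ for any $\w$, the intensity is simply $\lamm_r(z)=\lam_r(z,0)$. Since $r\notin\mRa$, the reactant satisfies $\rho_\mU(y)=0$, so $(z,0)\geq y$ in $\Z_{\geq0}^n$ is equivalent to $z\geq y$ in $\Z_{\geq0}^p$, and Assumption \ref{ass1} closes this branch immediately.

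For the ``walk-only'' branch, $r\notin\mR\setminus\mRu$ gives $\lamm_r(z)=\sum_{\w\in\mWu(r)}\Lambda_\w(z)$. By definition of $\mWu(r)$, every $\w\in\mWu(r)$ satisfies $\mathrm{r}(\w)=y$. Since each summand is nonnegative, $\lamm_r(z)>0$ iff some $\Lambda_\w(z)>0$. Inspecting \eqref{eq:bigLambda}, $\Lambda_\w(z)>0$ iff every numerator $\lam_{L(\gamma_j)}\bigl((z,0)+\sum_{i=1}^{j-1}\xi_i\bigr)$ is strictly positive (the corresponding denominators are then automatically positive, as $\gamma_j$ itself lies in $\text{out}(s(\gamma_j))$ and contributes its own numerator). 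Lemma \ref{non-neg_min} identifies this with $z\geq\mathrm{r}(\w)=y$, which finishes this case.

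In the mixed branch, where $r\in\mR\setminus\mRu$ coincides with $\mathrm{r}(\w)\to\mathrm{p}(\w)$ for some walks, $\lamm_r(z)=\lam_r(z,0)+\sum_{\w\in\mWu(r)}\Lambda_\w(z)$, a sum of nonnegative terms. If $z\geq y$, then $\lam_r(z,0)>0$ by Assumption \ref{ass1} (since $\rho_\mU(y)=0$), so $\lamm_r(z)>0$. Conversely, if $\lamm_r(z)>0$, then either $\lam_r(z,0)>0$, whence $(z,0)\geq y$ and $z\geq y$; or some $\Lambda_\w(z)>0$, whence $z\geq\mathrm{r}(\w)=y$ by the walk-branch argument. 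The main obstacle is the small bookkeeping observation about $\Lambda_\w(z)$: one must recognise that only the numerators need to be examined, since each denominator is a sum over outgoing edges of $s(\gamma_j)$ containing the corresponding numerator, so denominator positivity is implied by numerator positivity. Once this is in place, the corollary follows from a three-way case split and an invocation of Lemma \ref{non-neg_min}, with no further computation.
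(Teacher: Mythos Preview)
Your proof is correct and follows essentially the same approach as the paper. The paper's proof is terser: it simply asserts ``it is enough to prove it for $\Lambda_\w(z)$'' (thereby folding your slow and mixed branches into an implicit remark) and then argues, exactly as you do, that positivity of all numerators in \eqref{eq:bigLambda} is equivalent to $z\geq\mathrm{r}(\w)$ via Lemma \ref{non-neg_min}, with the denominator observation left implicit. Your explicit three-way split and the remark that each denominator dominates its own numerator make the argument cleaner, but there is no substantive difference in strategy.
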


\begin{proof}
It is enough to prove it for $\Lambda_{\w}(z),$ where $\w\in \mWu$ and $z\in\Z_{\geq 0}^p$, that is,
 $$\Lambda_{\w}(z)>0\iff
z\geq \mathrm{r}(\w).$$
Only one direction is not obvious. Assume $z\geq\mathrm{r}(\w)$. Using the definition of $\Lambda_{\w}(\cdot)$, it is sufficient to show that the numerators in the fraction are all non-zero. This holds by Lemma \ref{non-neg_min}, hence  $\Lambda_{\w}(z)>0$.
\end{proof}

\begin{example}\label{calc_ex_kin}
We continue with Examples \ref{cont_ex}, \ref{ex:MM3} assuming stochastic mass-action kinetics. There are three reduced reactions with the first two being trivial,
$$s_1\colon E+A \to E+A,\quad s_2\colon E+A+B\to E+A+B,\quad s_3\colon E+A+B\to E+A+Q,$$
with infinitely many walks underlying the second and third reduced reaction. Here, $s_i$ is used to denote the  reactions of the reduced SRN, rather than $r_i$, to distinguish the reactions of the reduced RN from those of the original RN. We find 
$$\Lambda_{\w_0}(z)=\frac{\k_1\k_2z_Ez_A}{k_3z_B+\k_2},$$
$$\Lambda_{\w^A_n}(z)= \frac{\k_1\k_2z_Ez_A}{k_3z_B+\k_2}  \left(\frac{\k_3\k_4z_B}{(\k_4+\k_5)(k_3z_B+\k_2)}\right)^{\!\!n},\quad n\ge 1,$$
$$\Lambda_{\w^B_n}(z)= \frac{\k_1\k_3\k_5z_Ez_Az_B}{(\k_3z_B+\k_2)(\k_4+\k_5)}  \left(\frac{\k_3\k_4z_B}{(\k_4+\k_5)(k_3z_B+\k_2)}\right)^{\!\!n-1},\quad n\ge 1,$$
such that
$$\lamm_3(z)=\sum_{n=1}^\infty\Lambda_{\w_n^B}(z)=
\frac{\k_1\k_3\k_5z_Ez_Az_B}{(\k_4+\k_5)(\k_3z_B+\k_2)-\k_3\k_4z_B}.$$
Analogously, we  compute the sums over walks for the trivial reactions. 
We have $\lamm_1(z)=\Lambda_{\w_0}(z)$ and
 $$\lamm_2(z)=
\frac{\k_1\k_3\k_5z_Ez_Az_B}{(k_3z_B+\k_2)((\k_4+\k_5)(\k_3z_B+\k_2)-\k_3\k_4z_B)}.$$
Furthermore, a simple calculation gives the following identity for $z\in \Z_{\geq 0}^p$,
$$ \sum\limits_{\w\in \mWu}\Lambda_{\w}(z)=\lamm_1(z)+\lamm_2(z)+\lamm_{1}(z)=\lam_1(z,0).$$
An interesting observation is that the kinetics is \emph{mass-action-like} in the sense that the numerator alone is of mass-action form, while the denominator is positive (for any state). This is, however, not true in general, see Example \ref{changedMM}.
\end{example}

\begin{example}\label{ex_inf1}
Consider the RN, 
\[r_1\colon S_1\ce{->}U_1,\quad r_2\colon U_1\ce{->}S_3,\quad r_3\colon S_2+U_1\ce{->}U_1,\]
with  $\mU=\{U_1\}$ and $\mRf=\mRa$  proper. Taken with mass-action kinetics with corresponding  rate constants, $\k_1,\k_2,\k_3$, the reduced SRN has infinitely many reactions, $s_n\colon S_1+nS_2\to S_3$, $n\ge0 $, 
with transition intensities, 
$$\lamm_n(x)=\frac{\k_1\k_2x_{S_1}}{\k_2+\k_3({x_{S_2}-n})}
\prod_{i=0}^{n-1}\frac{\k_3({x_{S_2}-i})}{\k_2+\k_3({x_{S_2}-i})},\quad n\ge 0. $$
On any particular state, at most finitely many reactions can be \emph{active}, that is, have non-zero transition intensity.
\end{example}

For  examples from the biochemical literature, we refer to $\S$ \ref{sec_zoo}.

\section{Two-scale SRNs}
\label{reduced_dynamics}

We study transient approximability of the CTMC via the reduced SRNs under 
appropriate assumptions. We cover two settings, the standard time-homogeneous CTMC setting for SRNs, and afterwards the setting where the transition intensities of the SRN are allowed to be time-dependent.

\subsection{Transient approximation}\label{lim_stand}

Let $\mN=(\mC,\mR)$ be an SRN on a species set $\mS$ (of size $n$) with   non-interacting species  $\mU\subseteq \mS$,  fast proper reactions $\mRf\subseteq\mR_\mU$, and transition intensities $\lambda_r, r\in\mR$.
From this SRN, we construct a family of SNRs, indexed by a parameter $\epsilon>0$, with corresponding $Q$-matrix $Q^\epsilon$. In particular, the transition intensities of the reactions in $\mRf$ consuming non-interacting species are  scaled in $\eps$ as follows: 
 \begin{itemize}
 \item $\lam^\eps_{r}(x):=\frac{1}{\eps}\lam_{r}(x)$, $r\in\mF$, that is, for small $\epsilon$  the reactions are fast compared to the remaining reactions,
\item  $\lam^\eps_{r}(x):=\lam_{r}(x)$, $r\in\mR\setminus \mRf$, that is, the transition intensities are independent of $\epsilon$, and the reactions are slow. 
\end{itemize} 
Thus, we might write the corresponding $Q$-matrix as as sum of two terms, a fast part $\widetilde{Q}$,  scaled by $\frac{1}{\eps}$, and a slow part $\widehat{Q}$,
 $$Q^\eps=\frac{1}{\eps}\widetilde{Q}+\widehat{Q}.$$
It follows from Definition \ref{defredSnet}  that the reduced SRN, obtained from the SRN with transition intensities $\lam^\eps_{r}$, $r\in\mR$, has transition intensities $\tau_r$, $r\in \mR_{\mU,\mF}$, independent of $\eps$.

Let $(X_\eps(t))_{t\ge 0}$ be Markov chains on $\Z_{\ge0}^n$ with generators $Q^\eps$, $\eps>0$, respectively.  We note that all $Q^\eps$, $\eps>0$, are dynamically equivalent in the sense that the connectivity of the state space and the decomposition of the state space $\Z^n_{\ge0}$ into communicating classes   are independent of $\eps$. Furthermore, let $(X_0(t))_{t\ge0}$ be a Markov chain on $\Z_{\ge0}^p$ (excluding the $m=n-p$  coordinates for non-interacting species) with generator $Q^0$, obtained from the transition intensities $\tau_r$, $r\in \mR_{\mU,\mF}$.
 In the following, we provide conditions that guarantee that the dynamics of $(X_\eps(t))_{t\ge 0}$ is \emph{similar} (in a sense to be made precise) to the dynamics of $(X_0(t))_{t\ge0}$, whenever $\eps$ is small. Technically, we will consider the limit as $\eps\to0$.

We restrict  attention to a particular \emph{closed} set $E$ of $Q^\eps$, such that $E\cap (\Z_{\geq 0}^p\times \{0\})\not=\emptyset$. That is, there are states in $E$ with no molecules of non-interaction species being present. If the reduced SRN starts in $X_0(0)\in E_0:=\rho_\mO(E\cap (\Z_{\geq 0}^p\times \{0\}))$, then by construction of the reduced reactions, $X_0(t)\in E_0$ for all $t>0$. Furthermore, $E_0$ is a closed set of $Q^0$.

We require the following. 
\begin{assumption}
\label{ass2}
The closed set $E$ is finite, and   all 
$z\in E_0$ satisfy the following:
 \begin{equation}\label{eq:fin_out3}\
 \sum\limits_{\w\in \mWu}\Lambda_{\w}(z)= \sum\limits_{r\in \mRb\setminus \mRa}\lam_{r}(z,0)
\end{equation}
\end{assumption}

The sum on the left hand side  also includes walks giving rise to trivial reactions, as  in Example \ref{calc_ex_kin}. The  condition might fail in two ways. Either a walk is \emph{blocked} because of lack of molecules of core species (for example $r_1\colon0\to U,$ $r_2\colon S+U\to 0$; if there no molecules of $S$, then the fast chain $r_1,r_2$ is blocked and a molecule of $U$ cannot be degraded), or one might be \emph{trapped} in infinite walks (for example, $r_1\colon0\to U$, $r_2\colon2S+U\to 3S+U$, $r_3\colon U\to 0$; there is positive probability of an infinite walk $r_1,r_2,r_2,\ldots$ without degradation of the $U$ molecule). The latter cannot occur if $E$, and hence $E_0$, are finite as this implies that at most finitely many (reduced) reactions can be active on any state of $E$ ($E_0$). 

For a sub-conservative RN, any closed set of states $E$ (not necessarily a communicating class) in a directed stoichiometric compatibility class is finite.
If Assumption \ref{ass1} is satisfied, then the conditions in Assumption \ref{ass2} do not depend on the transition intensities $\lam_r$, $r\in\mR$,  but  only on the structure of the underlying RN  (together with $\mU$ and $\mRf$).  Furthermore, as \eqref{eq:lambda} holds by assumption, then Assumption \ref{ass2} is meaningful even for infinite RNs. In particular, Theorem \ref{main_thm_stand} also holds for infinite RNs as $E$ is finite. 

For a subset $B\subseteq E\cap (\Z_{\geq 0}^p\times \{0\})$,  we define  $B_0=\rho_\mO(B)$.

\begin{theorem}\label{main_thm_stand}
Let $\mN=(\mC,\mR)$ be an SRN on $\mS$ with transition intensities $\lambda_r$, $r\in\mR$, $\mU\subseteq \mS$  a set of non-interacting species,  and $\mRf\subseteq\mR_\mU$ a set of  fast proper reactions.
 Suppose Assumptions \ref{ass1} and \ref{ass2} hold.
 Let $\pi$ be a probability distribution on $E\cap (\Z_{\geq 0}^p\times \{0\})$, and $\pi_0$ the induced probability distribution on $E_0$ by omitting the last $n-p$ coordinates of the states of $E$.
 Then, the following holds for any $0<T$:
 $$\sup_{t\in[0,T]}|P_{\pi}(X_\eps(t)\in B)-P_{\pi_0}(X_0(t)\in B_0)|=O(\epsilon)\quad \text{for }\eps\to 0.$$
In particular, for any $B\subseteq E\cap (\Z_{\geq 0}^p\times \{0\})$ and any $0<T$:
$$\lim_{\eps\to 0}\sup_{t\in[0,T]}|P_\pi(X_\eps(t)\in B)-P_{\pi_0}(X_0(t)\in B_0)|=0.$$
 \end{theorem}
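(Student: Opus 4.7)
The plan is to apply a singular perturbation theorem for finite-state continuous-time Markov chains, combined with an explicit identification of the aggregated generator with $Q^0$.

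First, I would exploit that $E$ is closed and finite under each $Q^\eps$ to restrict the processes to $E$, and partition $E = E_a \cup E_t$, with $E_a := E \cap (\Z_{\ge 0}^p \times \{0\})$ (in bijection with $E_0$ via $\rho_\mO$) and $E_t := E \setminus E_a$. Since every $r \in \mF$ has reactant $y$ with $\rho_\mU(y) \ne 0$, the fast generator satisfies $\widetilde{Q}(x, \cdot) = 0$ for $x \in E_a$, so $E_a$ consists of absorbing states of $\widetilde{Q}$. Properness of $\mF$ (Definition \ref{ass0}) provides, for each non-interacting molecule present at any $x \in E_t$, a fast chain that degrades it; Assumption \ref{ass2} together with Lemma \ref{non-neg_min} rules out the two obstructions flagged after the assumption --- blocking by insufficient core species and trapping in infinite fast loops --- and finiteness of $E$ then yields finite expected absorption time at some point of $E_a$.

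Second --- the workhorse step --- I would apply a singular perturbation theorem for CTMCs whose fast part has purely absorbing classes, e.g.\ as in \cite{yin_zhang_trans, yin2012continuous}, or equivalently a watched-chain argument in the spirit of \cite{Freedman_a}. Since $\pi$ is supported on $E_a$ (absorbing states of $\widetilde{Q}$), no initial layer arises, and the result yields an aggregated generator $\bar Q$ on $E_a$ together with a uniform error bound of the form
$$\sup_{t \in [0, T]} \bigl| \bP_\pi(X_\eps(t) = x) - \bP_{\pi_0}(X_0(t) = \rho_\mO(x)) \bigr| = O(\eps), \qquad x \in E_a,$$
provided one verifies $\bar Q = Q^0$. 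Summing over $x$ with $\rho_\mO(x) \in B_0$ then gives the bound claimed in the theorem, and the second assertion follows immediately.

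The main obstacle is identifying $\bar Q$ with $Q^0$. Standard singular-perturbation formulas give, for $x = (z, 0)$ and $x' = (z', 0)$ in $E_a$,
$$\bar Q(x, x') = \widehat{Q}(x, x') + \sum_{x'' \in E_t} \widehat{Q}(x, x'') \, h(x'', x'),$$
where $h(x'', x')$ denotes the $\widetilde{Q}$-absorption probability at $x'$ starting from $x''$. Expanding $h$ by the strong Markov property at each successive fast-reaction step produces a sum over admissible state-space paths; grouping these paths by the edge labels of the corresponding walks in $\mGuf$ reproduces exactly the product formula \eqref{eq:bigLambda} for $\Lambda_\w(z)$, summed over $\w \in \mWu(r)$ whose net effect matches $x' - x$. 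Matching this with Definition \ref{defredSnet} gives $\bar Q(x, x') = \lamm_r(z)$ for the appropriate reduced reaction $r$, hence $\bar Q = Q^0$. The delicate point is that $\mWu$ may be infinite while $E$ is finite: finiteness of $E$ bounds core-species counts along any relevant walk, and Assumption \ref{ass2} ensures the walk sum converges absolutely and accounts for the full creation rate of non-interacting species, which is precisely what is needed for the aggregation procedure to apply with a uniform $O(\eps)$ error on $[0, T]$.
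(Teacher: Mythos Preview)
Your overall strategy—apply a finite-state singular-perturbation theorem and then identify the aggregated generator with $Q^0$ via a watched-chain/walk expansion—is the same as the paper's. But your two-block partition $E=E_a\cup E_t$ is too coarse, and the claim that every state of $E_t$ is $\widetilde Q$-transient is unjustified and in fact false. Assumption~\ref{ass2} concerns only $z\in E_0$: it guarantees that after \emph{one} slow reaction has produced a single non-interacting molecule from a state in $N_0$, the fast dynamics can degrade it. It says nothing about states with two or more non-interacting molecules, which lie in $E$ because for $\eps>0$ several slow creations can occur before a fast degradation. Concretely, take $r_1\colon S_1\to U_1$ slow and $r_2\colon S_1+U_1\to S_2$ fast, $\mU=\{U_1\}$, $\mF=\{r_2\}$; then $\mF$ is proper, and for the closed set $E$ reachable from $(z_{S_1},z_{S_2},z_{U_1})=(2,0,0)$ one verifies Assumption~\ref{ass2} on $E_0=\{(2,0),(0,1)\}$. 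Yet $(0,0,2)\in E_t$ is \emph{absorbing} for $\widetilde Q$, since $r_2$ needs $S_1\ge 1$. Hence neither your formula $\bar Q(x,x')=\widehat Q(x,x')+\sum_{x''\in E_t}\widehat Q(x,x'')h(x'',x')$ nor the absorbing/transient version of the Yin--Zhang theorem applies as stated.

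The paper resolves exactly this difficulty with a finer four-block partition built from a surrogate process $\mathcal M$ (slow reactions allowed only from $N_0$, fast reactions elsewhere). Writing $\I_{S_1}=\mathcal M(x')\cap N_0$, $\I_{F_1}=\mathcal M(x')\cap N_1$, and $\I_{S_2},\I_{F_2}$ for the remainder, only $\I_{F_1}$ is known to be $\widetilde Q$-transient (Lemma~\ref{transient_+}); states of $\I_{F_2}$ may be absorbing or lie in closed classes of $\widetilde Q$. The substantive step is a block-matrix computation showing that the full Yin--Zhang limit generator $\overline Q_*$ (which now involves the stationary distributions of the closed $\widetilde Q$-classes in $\I_{F_2}$) has no transitions out of $\I_{S_1}$, so that one may restrict attention to $\I_{S_1}\cup\I_{F_1}$. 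On that restricted state space your watched-chain identification of the limit generator with $Q^0$ via the walk expansion is correct and is precisely how the paper concludes.
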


The proof is in  Appendix $\S$ \ref{sect_proof_stand}. As a consequence of the theorem we have:

 \begin{corollary}\label{cor:lim}
Assume as in Theorem \ref{main_thm_stand}.
 Let $x=(z,0)\in E\cap (\Z_{\geq 0}^p\times \{0\})$ and $B\subseteq E\cap (\Z_{\geq 0}^p\times \{0\})$. Then, for all $t\geq0$ it holds that:
 $$\lim_{\eps\to 0}P_{x}(X_\eps(t)\in B)=P_{z}(X_0(t)\in B_0).$$
 \end{corollary}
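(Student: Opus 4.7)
The plan is to deduce the corollary directly from Theorem \ref{main_thm_stand} by specialising the initial distribution to a point mass, so almost no new work is required. I will carry out the following steps.

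First, I fix $t \geq 0$ and choose any $T > t$ (for instance $T = t+1$). Since $x = (z,0) \in E \cap (\Z_{\geq 0}^p \times \{0\})$, I take $\pi := \delta_x$, the Dirac measure concentrated at $x$, which is a probability distribution on $E \cap (\Z_{\geq 0}^p \times \{0\})$. The induced probability distribution on $E_0 = \rho_\mO(E \cap (\Z_{\geq 0}^p \times \{0\}))$ obtained by projecting out the last $n-p$ coordinates is exactly $\pi_0 = \delta_z$, since the last $m = n-p$ coordinates of $x$ are zero and $\rho_\mO(z,0) = z$.

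Next, I apply Theorem \ref{main_thm_stand} with this choice of $\pi$, $\pi_0$, and with the chosen $T$. The assumptions of the theorem (Assumptions \ref{ass1} and \ref{ass2}) are assumed in the corollary statement, and the set $B \subseteq E \cap (\Z_{\geq 0}^p \times \{0\})$ is admissible. The theorem yields
$$\sup_{s \in [0,T]} \bigl| P_\pi(X_\eps(s) \in B) - P_{\pi_0}(X_0(s) \in B_0) \bigr| = O(\eps) \quad \text{as } \eps \to 0.$$
Unwinding the definitions of $\pi$ and $\pi_0$, this is exactly
$$\sup_{s \in [0,T]} \bigl| P_x(X_\eps(s) \in B) - P_z(X_0(s) \in B_0) \bigr| = O(\eps).$$

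Finally, since the chosen $t$ lies in $[0,T]$, the supremum bounds the value at $s=t$, whence
$$\bigl| P_x(X_\eps(t) \in B) - P_z(X_0(t) \in B_0) \bigr| \leq \sup_{s \in [0,T]} \bigl| P_x(X_\eps(s) \in B) - P_z(X_0(s) \in B_0) \bigr| \xrightarrow{\eps \to 0} 0,$$
which gives the desired pointwise limit. There is no genuine obstacle here; the corollary is a direct specialisation of the theorem, with the only minor observation being the compatibility of the projection $\rho_\mO$ with the point mass at $(z,0)$.
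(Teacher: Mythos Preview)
Your proof is correct and matches the paper's intent: the paper simply states the corollary ``as a consequence of the theorem'' without further argument, and your specialisation of $\pi$ to the Dirac mass $\delta_x$ (so that $\pi_0=\delta_z$) together with the trivial passage from the uniform bound on $[0,T]$ to the pointwise limit at a fixed $t$ is exactly the intended derivation.
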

 
If the initial state $X_\eps(0)$ has more than one molecule of the non-interacting species, then these will in general be depleted quickly for small $\eps$. This is however not always the case, see   Example \ref{changedMM}.

\subsection{The case of time-dependent transition intensities}\label{lim_stand2}

In the previous section, we considered the scaling limit  of time-homogeneous SRNs. While time-constant transition intensities are reasonable in many situations, time-dependence becomes relevant, for example, in connection with variation in experimental setups, cycle-dependent mechanisms or temperature changes \cite{anderson_time_dep}. 

The setup is essentially the same as in Section \ref{lim_stand}, except we now allow time-dependent  transition intensities, $\lam_r(t,x)$, $r\in\mR$, for $t$ in a compact time interval $ t\in [0,T]$.
We make the following assumption.

\begin{assumption}\label{ass4}
The time-dependent transition intensities, $\lam_r(t,x)$, $r\in\mR$, are such that $\lam_r(t,\cdot)$ satisfies Assumption \ref{ass1} for all $t\in [0,T]$ and such that for all $x\in \Z^n_{\geq0}$, $\lam_r(\cdot,x)$, $r\in\mR$, are $C^1$-functions from $[0,T]\to \R_{\geq 0}$  with Lipschitz derivatives. 
\end{assumption}

The  transition intensities are scaled as in the homogeneous case.
Under Assumption \ref{ass4}, the reachability properties of the CTMC are invariant over time and match the reachability properties of the homogeneous SRNs under Assumption \ref{ass1}. Assumption \ref{ass2} holds for all $t\in[0,T]$, provided it holds for one $t$, as the assumption is structural.

With these changes and amendments, then Theorem \ref{main_thm_stand} and Corollary \ref{cor:lim} hold as well for the time-dependent case. A proof is sketched in Appendix \ref{proof_nonhom}.

\section{Examples and properties}\label{sectfi}

We next illustrate the reduction procedure with various examples. Furthermore, we  derive  sufficient conditions for Assumption \ref{ass2} to hold, and   compare the long-term behavior of the CTMC of the reduced SRN with the one of the original SRN. 

\subsection{Zoo of examples}\label{sec_zoo}

In this section, we give realistic examples to elaborate on the reduction. All examples are taken with stochastic mass-action kinetics, hence they satisfy Assumption \ref{ass1}. To indicate mass-action kinetics, we put the reaction rate constants $\k_i$ as labels of the reactions, and omit the reaction names $r_i$. A key source book for realistic examples is  \cite{syst_ing}, that contains reaction networks used in systems biology. Other useful references are \cite{bowden}, which focuses on enzyme kinetics, and \cite{Murray2002}, which has examples from a range of areas in biology.

\begin{example}\label{red_2MM}
We first consider the two-substrate Michaelis-Menten mechanism \cite[Section 3.1.2]{syst_ing}; also discussed in Examples \ref{two_sub_Ing}, \ref{cont_ex}, \ref{ex:MM3}, and \ref{calc_ex_kin}, and repeated here for convenience with reaction names replaced by their reaction constants:
$$E+A\ce{<=>[\k_1][\k_2]}EA,\quad EA+B\ce{->[\k_3]}EAB$$
\vspace{-.2cm}
$$ EAB\ce{->[\k_4]} EA+B,\quad EAB\ce{->[\k_5]}E+P+Q.$$
  The mechanism is commonly studied as a deterministic mass-action system, using Tikhonov-Fenichel singular perturbation theory or the QSSA, with short-lived \emph{intermediate complexes} (non-interacting species) $\mU=\{EA,EAB\}$ and fast reactions $\mF=\mR_\mU=\{r_2,r_3,r_4,r_5\}$. Assuming the same in the stochastic setting gives the reduced RN in Example \ref{ex:MM3} with one non-trivial reaction,
$$ s_1\colon E+A+B\to E+P+Q,$$
with transition intensity
$$\lamm_{1}(z)=\frac{\k_1\k_3\k_5z_Ez_Az_B}{(\k_4+\k_5)(\k_3z_B+\k_2)-\k_3\k_4z_B}.$$
Assumption \ref{ass2} is fulfilled for all $z\in \Z_{\geq 0}^5$  ($p=5$, see Example \ref{calc_ex_kin}), in particular the original RN is 	conservative. Then, Theorem \ref{main_thm_stand} applies to any closed set $E$ in a directed stoichiometric compatibility class that intersects $ \Z_{\geq 0}^5\times \{0\}$ non-trivially. 
\end{example}

\begin{example}\label{changedMM}
Consider the two-substrate Michaelis-Menten mechanism  with  $\mU=\{EA,EAB\}$ as above, but fast reactions $\mF=\{r_3,r_4,r_5\}\subseteq\mR_\mU$, that is, we remove $r_2$ from the fast reactions in the previous example. Then there is only one reduced reaction in contrast to Example \ref{red_2MM} that additionally had two trivial reactions, given by
$$ s_1\colon E+A+B\to E+P+Q,$$
with transition intensity
\begin{equation*}\label{identity}\lamm_{1}(z)=\k_1z_Ez_A\1_{\{z_B\colon z_B\geq 1\}}.\end{equation*}
Assumption \ref{ass2} holds for all $z=(z_A,z_B,z_E,z_P,z_Q)\in\Z_{\ge 0}^5$ 
with $z_B>0$:
$$ \sum\limits_{\w\in \mWu}\Lambda_{\w}(z)=\lamm_{1}(z)=\k_1z_Ez_A\1_{\{z_B\colon z_B\geq 1\}}=\k_1z_Ez_A=\lam_1(z,0).$$
Note, that this example does not have mass-action-like kinetics,  as $\tau_1(z)$ cannot be expressed as a fraction with positive denominator such that the numerator is of mass-action form. Alternatively, we might extend by multiplication and division by $z_B$.
\end{example}

\begin{example}\label{ex_13}
Consider a non-competitive inhibition network \cite[Section 3.2.2]{syst_ing}:
$$S+E\ce{<=>[\kappa_1][\kappa_2]}ES \ce{->[\kappa_3]} E+P,\quad I+E\ce{<=>[\kappa_4][\kappa_5]}EI, $$
\vspace{-.3cm}
$$I+ES\ce{<=>[\kappa_6][\kappa_7]}ESI, \quad S+EI \ce{<=>[\kappa_8][\kappa_9]}ESI,$$
where $E$ is an enzyme that catalyze the conversion of a substrate $S$ into a substrate $P$. The conversion is delayed by an inhibitor $I$ that binds to the enzyme and to the substrate through  the intermediate complex $EI$. It is non-competitive in the sense that $I$ does not compete with $E$ for substrate binding, but rather acts on $E$ directly; a phenomenon known as allosteric regulation.

This example is typically analyzed by means of the QSSA with short-lived species; here $\mU=\{ES,EI,ESI\}$. In our setting, taking  $\mU$ to be non-interacting species with  $\mRf=\mRa$, then the reduced SRN becomes
\begin{align*}
& s_1\colon S+2I\to S+I+E,\quad s_2\colon S+I\to E+P,\quad s_3\colon S+2I\to E+I+P\\
& \qquad s_4\colon I+E+S\to I+E+P, \quad s_5\colon I+E+S\to 2I+S,
 \end{align*}
 with  transition intensities

\smallskip
 \footnotesize
 \begin{align*}
\lamm_{1}(z)&=\frac{\k_1\k_5\k_6\k_9z_Sz_I(z_I-1)}{(\k_2+\k_3)(\k_7(\k_5+\k_8z_S)+\k_5\k_9)+\k_6\k_5\k_9(z_I-1)}\\
\lamm_{2}(z)&=\frac{\k_1\k_3z_Sz_I}{\k_2+\k_3+\k_6(z_I-1)}\\
\lamm_{3}(z)&=\k_1\k_3z_Sz_I\left(\frac{\k_7(\k_5+\k_8z_S)+\k_5\k_9}{(\k_2+\k_3)(\k_7(\k_5+\k_8z_S)+\k_5\k_9)+\k_6\k_5\k_9(z_I-1)}-\frac{1}{\k_2+\k_3+\k_6(z_I-1)}\right)\\
\lamm_{4}(z)&=\frac{\k_3\k_4\k_7\k_8(\k_7+\k_9)z_Ez_Iz_S}{[\k_5(\k_7+\k_9)+\k_8\k_7z_S]((\k_2+\k_3)(\k_7+\k_9)+\k_6\k_9z_I) )-\k_8\k_7\k_6\k_9z_Iz_S}\\
\lamm_{5}(z)&=\frac{\k_2\k_4\k_7\k_8(\k_7+\k_9)z_Ez_Iz_S}{[\k_5(\k_7+\k_9)+\k_8\k_7z_S]((\k_2+\k_3)(\k_7+\k_9)+\k_6\k_9z_I) )-\k_8\k_7\k_6\k_9z_Iz_S}.
  \end{align*}
 \normalsize

\medskip
For any closed set $E$ in a directed stoichiometric compatibility class that intersects $ \Z_{\geq 0}^4\times \{0\}$ non-trivially, Assumption \ref{ass2} is satisfied.
\end{example}

\begin{example}\label{allost_act}
Consider an example of allosteric activation \cite[Problem 3.7.8]{syst_ing} that models the regulation of an enzyme by binding of an allosteric activator before the enzyme can bind a substrate:
$$
R+E\ce{<=>[\kappa_1][\kappa_2]}ER,\quad ER+S \ce{<=>[\kappa_3][\kappa_4]} ERS\ce{->[\kappa_5]}P+ER.$$
Here, $R$ is the allosteric activator, $S$ the substrate, $E$ the enzyme, and $P$ the product. While the RN is reminiscent of the the two-substrate example \ref{red_2MM}, the enzyme-activator complex $ER$ stays intact after the product $P$ dissociates in  reaction $r_5$.

To analyze the behavior of the system, often the QSSA is applied to a  mass-action ODE system with short-lived intermediate complexes (non-interacting species) $\mU=\{ER,ERS\}$.
Choosing analogously $\mRf=\mRa$ in the stochastic case gives a proper  set of fast reactions. Then, the reduced RN has infinitely many reactions, given by:
\begin{equation*}
s_{2k-1}\colon E+R+kS\to E+R+kP,\quad  s_{2k}\colon E+R+(k+1)S\to E+R+kP+S,
 \end{equation*}
for $k\ge 1$, with
transition intensities 
$$\lamm_{2k-1}(z)= \frac{\k_1\k_2z_Rz_E}{\k_2+\k_3(z_S-k-2)}\prod_{i=0}^{k-1}\frac{\k_3\k_5(z_S-i)}{\k_2\k_4+\k_2\k_5+\k_3\k_5(z_S-i)}, $$
$$\lamm_{2k}(z)=\lamm_{2k-1}(z)\frac{\k_3\k_4(z_S-k)}{\k_2\k_4+\k_2\k_5+\k_3\k_5(z_S-k)}.$$
All $z\in \Z_{\geq 0}^4$ ($p=4$) satisfy  \eqref{eq:fin_out3}. As $\mN$ is sub-conservative, any closed set $E$ of a directed stoichiometric compatibility class is finite. Hence, for any closed set $E$ in a directed stoichiometric compatibility class that intersects $ \Z_{\geq 0}^4\times \{0\}$ non-trivially, Assumption \ref{ass2} is satisfied. 
\end{example}

\begin{example}\label{suicide_substr}
As a final example, consider  a mechanism-based inhibitor system \cite[Section 6.4]{Murray2002},
\begin{align*}
S+E\ce{<=>[\kappa_1][\kappa_2]}X\ce{->[\kappa_3]} & Y\ce{->[\kappa_6]}E+P, \quad
Y\ce{->[\kappa_4]} E_i,
\end{align*}
 where $S,P$ are  substrate and product, respectively, $X,Y$ intermediate complexes, $E$ an enzyme in active form (implying it might bind to the substrate) and $E_i$, the enzyme in its inactivated form. A main interest is to know the final ratio of the product to the inactivated enzyme  \cite[Section 6.4]{Murray2002}. For this, often the QSSA is applied to the short-lived species $\mU=\{X,Y\}$. Analogously, we consider the stochastic system with $\mF=\mRa$ being fast and proper, and study the corresponding stochastic reduction. 
 The reduced SRN has reactions
$$  s_1\colon S+E\to E+P,\quad s_2\colon S+E\to E_i$$
with  transition intensities:
$$\lamm_{1}(z)=\frac{\k_1\k_3\k_5z_Sz_E}{(\k_2+\k_3)(\k_4+\k_5)} , \quad \lamm_{2}(z)=\frac{\k_1\k_3\k_4z_Sz_E}{(\k_2+\k_3)(\k_4+\k_5)}.$$
As  $\mU=\{X,Y\}$ are intermediate species, all $z\in \Z_{\geq 0}^4$ ($p=4$) satisfy  \eqref{eq:fin_out3}. As $\mN$ is sub-conservative, for any closed sets $E$ of a directed stoichiometric compatibility class Assumption \ref{ass2} is satisfied. 
\end{example}

\subsection{Properties enabling sufficient conditions for simplification}\label{secondl}

While Assumption \ref{ass1} and $\mF$ being proper are both easy to check, Assumption \ref{ass2} is non-trivial in general.
Assumption \ref{ass2} requires a finite closed set of the CTMC  and that molecules of non-interacting species can be degraded through chains of fast reactions. The following gives sufficient conditions for this to hold (see  Appendix $\S$ \ref{additional_vers} for proof).

\begin{proposition}\label{suff_cond_fin_alt} Assume $\mN$ is a sub-conservative SRN on a species set $\mS$,  $\mU\subseteq\mS$ is a set of non-interacting species, and $\mF\subseteq\mR_\mU$ is a proper set of reactions. Suppose Assumption \ref{ass1} holds. Furthermore, assume that one of the following holds:
\begin{itemize}
\item[(a)] $\mRf\cup (\mRb\setminus\mRa)$ is weakly reversible.
\item[(b)] $\mRf\cap \mRa\cap\mRb$ is weakly reversible, and for any reaction $r\colon y\to y'$ in  $\in \mRb\setminus\mRa$, the reverse reaction $r'\colon y'\to y$ is in $\mRf$.
\item[(c)] $\mN$ is weakly reversible and $\mF=\mRa$.
\end{itemize}
Then, for an arbitrary closed set  in a (directed) stoichiometric compatibility class, Assumption \ref{ass2} is satisfied.
\end{proposition}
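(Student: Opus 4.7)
My first step is to reduce Assumption~\ref{ass2} to a reachability statement in an auxiliary discrete-time Markov chain. Finiteness of $E$ follows directly from sub-conservativity: a positive vector $c$ with $\langle c,y'-y\rangle\le 0$ for every reaction makes each directed stoichiometric compatibility class bounded in $\R^n_{\ge 0}$, hence its intersection with $\Z^n_{\ge 0}$ is finite, and so is any closed subset $E$. For the identity~\eqref{eq:fin_out3}, I unfold~\eqref{eq:bigLambda}: for any walk $\w\in\mWu$ whose first edge carries label $r\in\mRb\setminus\mRa$, the quotient $\Lambda_\w(z)/\lam_r(z,0)$ is precisely the probability that the embedded jump chain driven by the fast reactions, started at $(z,0)+y'-y$, traces out $\w$. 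Summing gives
$$\sum_{\w\in\mWu}\Lambda_\w(z)\;=\;\sum_{r\in\mRb\setminus\mRa}\lam_r(z,0)\,\alpha_r(z),$$
where $\alpha_r(z)$ is the absorption probability of this auxiliary chain at a state with $u=0$. Because $\mF\subseteq\mRa$ forces every $\mF$-reaction to consume some $U$-species, all $u=0$ states are absorbing, and~\eqref{eq:fin_out3} is equivalent to $\alpha_r(z)=1$ for every $r\in\mRb\setminus\mRa$ enabled at $(z,0)$. For a finite Markov chain this is standard: it holds iff every state reachable in the auxiliary chain admits a positive-probability $\mF$-path to a $u=0$ state.

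\textbf{Cycle argument.} I would establish this reachability by a cycle argument, with case (c) as the prototype. Assume $\mF=\mRa$ and $\mN$ weakly reversible. Fix $r_0\colon y_0\to y_0'\in\mRb\setminus\mRa$ enabled at $(z,0)$, and choose a cycle $y_0\to y_0'\to y_2\to\cdots\to y_{k-1}\to y_0$ in $\mN$ containing $r_0$. Since $\rho_\mU(y_0)=0$ and $\rho_\mU(y_0')\neq 0$, let $j$ be the least index past $y_0'$ on the cycle with $\rho_\mU(y_j)=0$. For $0<i<j$ the complex $y_i$ has $\rho_\mU(y_i)\neq 0$, so each edge $y_i\to y_{i+1}$ lies in $\mRa=\mF$. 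Firing these $\mF$-reactions in order from $(z,0)+y_0'-y_0$ produces the states $(z,0)+y_i-y_0$, and their successive enablement telescopes to the single condition $(z,0)\ge y_0$, which is exactly the assumed enablement of $r_0$. The terminal state $(z,0)+y_j-y_0$ has $\rho_\mU=0$, so absorption is reached. In (a) the same argument runs inside $\mF\cup(\mRb\setminus\mRa)$: the first exit after $y_0'$ on the cycle cannot be an $(\mRb\setminus\mRa)$-edge (those enter the $U$-region), so it lies in $\mF$. In (b) the reverse of $r_0$ is itself an $\mF$-edge $y_0'\to y_0$, giving a one-step exit, while weak reversibility of $\mF\cap\mRa\cap\mRb$ routes intermediate states back to a complex with such a reverse exit available.

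\textbf{Extension and main obstacle.} For an arbitrary state $(w,u)$ with $u\neq 0$ reachable by the auxiliary chain, I would iterate the same construction applied to any enabled $\mF$-reaction $y\to y'$ at $(w,u)$: the cycle through it exists by the weak reversibility hypothesis of (a), (b), or (c); telescoping again reduces enablement along the cycle to the enablement of $y\to y'$ at $(w,u)$; and one traversal decreases $|u|$ by one, so at most $|u|$ iterations reach absorption. The principal technical hurdle is ruling out \emph{blocked} states---reachable $(w,u)$ with $u\neq 0$ at which every outgoing $\mF$-edge of the active $U_i$-vertex is disabled by a core-species shortage. Showing these do not arise requires combining properness of $\mF$, sub-conservativity, Corollary~\ref{comp_cor}-type compatibility, and the weak reversibility hypothesis to ensure that the core-species counts along reachable auxiliary states stay large enough for at least one cycle-initiating reaction to be enabled. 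Handling this inductively, particularly in case (b) where the internal weak reversibility is restricted to $\mF\cap\mRa\cap\mRb$, is where I expect the bulk of the technical effort to lie.
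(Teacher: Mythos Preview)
Your reduction to absorption in the jump chain of $\mN|_\mF$ is correct and matches the paper, and the telescoping observation---that enablement of every edge along a cycle in a weakly reversible set collapses to enablement of the first edge---is the right engine. Your treatment of the base state $x_1=(z,0)+y_0'-y_0$ (following the cycle of $r_0\in\mRb\setminus\mRa$, which necessarily passes through the complex $y_0$ with $\rho_\mU(y_0)=0$) is fine.

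The gap is in your extension. At an arbitrary reachable $(w,e_i)$ you pick \emph{some} enabled $\mF$-reaction $y\to y'$, traverse its cycle, and assert that ``one traversal decreases $|u|$ by one''. This fails whenever the chosen cycle lies entirely among complexes with $\rho_\mU\neq 0$ (for instance $U_1\leftrightarrow U_2$ inside a larger weakly reversible set): completing such a cycle returns you to $(w,e_i)$ unchanged. So the local argument rules out neither blocked $|u|=1$ states nor closed $\mN|_\mF$-communicating classes of $|u|=1$ states, and the ``principal hurdle'' you isolate is not the only obstruction. The paper's fix is to use the \emph{history} rather than a local choice: since $(w,e_i)$ was reached from $(z,0)$ by $r_0$ followed by $\mF$-reactions, all lying in the weakly reversible set of (a), (b) or (c), your own telescoping lemma (applied reaction by reaction, in reverse order) yields a return path in that set back to $(z,0)$. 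Along this specific path every intermediate state has $|u|\in\{0,1\}$, and any step from $\mRb\setminus\mRa$ (whose reactant has $\rho_\mU=0$) necessarily sits at a state already having $|u|=0$; hence the first $|u|=0$ state on the return path is reached using only $\mF$-reactions. Enablement along this path is automatic by telescoping, so blocked states never arise and no separate inductive control of core-species counts is needed.
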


None of the conditions in the theorem are necessary for Assumption \ref{ass2} to hold. For intermediate species a stronger result holds.

\begin{lemma}\label{interm_finite_fine_alt}
Assume $\mN$ is an SRN on a species set $\mS$, and that $\mU\subseteq\mS$ is a set of intermediate species. Suppose Assumption \ref{ass1} holds. Then, $\mF\subseteq\mR_\mU$  is proper if and only if all $z\in \Z_{\geq 0}^p$ satisfy  \eqref{eq:fin_out3} of Assumption \ref{ass2}.
\end{lemma}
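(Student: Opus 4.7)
The plan is to exploit the structural property characterizing intermediate species: each $U_i \in \mU$ appears in the complex set $\mC$ only as the sole-species complex $U_i$. Consequently, every fast reaction in $\mF \subseteq \mRa$ is either of the form $U_i \to U_j$ (leaving the core state unchanged) or of the form $U_i \to y'$ with $y'$ a core complex, and every slow reaction in $\mRb \setminus \mRa$ producing an intermediate is of the form $y \to U_j$ with $y$ a core complex. This sole-complex property is the key leverage and is what separates intermediates from general non-interacting species.

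For the implication $(\Leftarrow)$, fix $z \in \Z_{\ge 0}^p$ and partition $\mWu$ according to the first edge $\gamma_1 \colon \ast_{in} \to U_{j_0}$, which is labelled by a unique reaction $r \colon y \to U_{j_0}$ in $\mRb \setminus \mRa$. By the structural observation, the core component of the accumulated state stays constantly equal to $z - y$ throughout the intermediate portion of the walk, that is, until the terminal edge into $\ast_{out}$ fires. Summing the defining product \eqref{eq:bigLambda} over all continuations of $\gamma_1$ therefore yields $\lam_r(z,0) \cdot P_{j_0}$, where $P_{j_0}$ is the absorption probability at $\ast_{out}$ of the time-homogeneous Markov chain on $\{U_1, \ldots, U_m, \ast_{out}\}$ starting at $U_{j_0}$ with transition probabilities
$$\Pr(U_k \to U_\ell) \;=\; \frac{\lam_{r'}(z-y,U_k)}{\sum_{\tilde r \in \mathrm{out}(U_k)} \lam_{\tilde r}(z-y,U_k)}$$
for each fast reaction $r' \colon U_k \to U_\ell$, and analogously for the edges into $\ast_{out}$. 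These probabilities are well defined because Assumption \ref{ass1} makes every outgoing intensity from $U_k$ strictly positive whenever $x_{U_k} \ge 1$. Since $\mF$ is proper, $\ast_{out}$ is reachable in $\mGuf$ from every $U_j$, so the standard absorption theorem for finite-state Markov chains with a reachable absorbing state gives $P_{j_0} = 1$. Summing over first edges then recovers $\sum_{r \in \mRb \setminus \mRa} \lam_r(z,0)$.

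For $(\Rightarrow)$, suppose $\mF$ is not proper, so some $U_i \in \mU$ lies on no walk $\ast_{in} \to \ast_{out}$ in $\mGuf$. Intermediates unreachable from $\ast_{in}$ in $\mGuf$ contribute to neither side of \eqref{eq:fin_out3} (by definition of the edges incident to $\ast_{in}$), so after removing any such species we may assume $U_i$ is reachable from $\ast_{in}$ but cannot reach $\ast_{out}$. Pick a path $\ast_{in} \to U_{j_0} \to \cdots \to U_i$ in $\mGuf$ whose first edge is labelled by $r \colon y \to U_{j_0}$, and set $z = y$, so that $\lam_r(z,0) > 0$. In the Markov chain above, each transition along the chosen path has strictly positive probability by Assumption \ref{ass1}, hence with strictly positive probability the chain reaches $U_i$, from which $\ast_{out}$ is by construction unreachable; consequently $P_{j_0} < 1$, the total contribution of walks starting with $r$ is strictly less than $\lam_r(z,0)$, and \eqref{eq:fin_out3} fails at $z$. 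The main obstacle is the rigorous identification of the continuation sum with an honest absorption probability in a finite-state Markov chain; that identification depends critically on the sole-complex property, which pins the core state along intermediate-to-intermediate steps and so prevents the walk's history from entering the per-step transition probabilities --- a simplification that is unavailable for general non-interacting species.
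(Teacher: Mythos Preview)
The paper does not supply a proof of this lemma; it is stated immediately after Proposition~\ref{suff_cond_fin_alt} without argument, so there is no paper proof to compare against. I evaluate your argument on its own.

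A minor point first: your direction labels are swapped. What you call $(\Leftarrow)$ proves ``$\mF$ proper $\Rightarrow$ \eqref{eq:fin_out3} holds for all $z$,'' which is the $(\Rightarrow)$ direction of the stated equivalence, and conversely.

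Your argument for ``$\mF$ proper $\Rightarrow$ \eqref{eq:fin_out3}'' is correct and isolates the right mechanism: because every fast reaction out of $U_k$ has reactant exactly $U_k$, the core component of the state is frozen at $z-y$ along the intermediate portion of any walk, so the continuation factors in \eqref{eq:bigLambda} really do define a time-homogeneous Markov chain on $\{U_1,\dots,U_m,\ast_{out}\}$. Properness gives reachability of $\ast_{out}$ from each $U_j$ in $\mGuf$, Assumption~\ref{ass1} makes every such edge carry strictly positive probability (independently of $z-y\ge 0$), and finite-state absorption yields $P_{j_0}=1$. This is essentially the jump-chain argument used in the appendix for Proposition~\ref{suff_cond_fin_alt}, specialized to intermediates.

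Your converse, however, has a genuine gap at the step ``after removing any such species we may assume $U_i$ is reachable from $\ast_{in}$ but cannot reach $\ast_{out}$.'' Removing the unreachable intermediates may leave \emph{no} bad intermediate behind, in which case your construction produces no $z$ at which \eqref{eq:fin_out3} fails. Concretely, take $\mU=\{U_1,U_2\}$ with reactions
\[
r_1\colon S_1\to U_1,\quad r_2\colon U_1\to S_2,\quad r_3\colon U_1\to U_2,\quad r_4\colon U_2\to S_2,
\]
and $\mF=\{r_2,r_4\}$, so $r_3\notin\mF$. Both $U_1,U_2$ are intermediates in the sole-complex sense, yet $U_2$ lies on no fast chain and $\mF$ is not proper. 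In $\mGuf$ the only outgoing edge from $U_1$ is $U_1\to\ast_{out}$ via $r_2$, so the unique walk gives $\Lambda_\Gamma(z)=\lambda_1(z,0)$ and \eqref{eq:fin_out3} holds for every $z$. Thus either the converse needs an extra hypothesis (e.g.\ $\mF=\mRa$, which is how the lemma is actually invoked in the Corollary after Theorem~\ref{thm_int}), or ``intermediate species'' here is meant in the sense of \cite{CW16} with an implicit reachability condition that excludes such $U_2$. Either way, your ``remove and reduce'' step must engage with this case explicitly rather than asserting that a reachable bad intermediate survives.
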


We summarize the properties of the examples from $\S$ \ref{sec_zoo}. 
In examples    \ref{red_2MM},  \ref{ex_13} and  \ref{allost_act} we can  directly conclude by Proposition \ref{suff_cond_fin_alt} (b) that Assumption \ref{ass2} holds.

\medskip
\begin{center}
\setlength{\tabcolsep}{0.5em} 
{\renewcommand{\arraystretch}{1.2}
\begin{tabular}{ |c|c|c|c|c| } 
 \hline
Example   &  $\mN$ sub-cons& 
\eqref{eq:fin_out3} fulfilled & $\mNuf$ finite\\

 \hline
 \ref{ex_inf1}   &$\checkmark$&$\checkmark$&$-$\\
 \ref{red_2MM}    &$\checkmark$&$\checkmark$&$\checkmark$\\

  \ref{changedMM}    &$\checkmark$&$-$&$\checkmark$\\
 \ref{ex_13} &$\checkmark$&$\checkmark$&$\checkmark$\\
 \ref{allost_act} &$\checkmark$&$\checkmark$&$-$\\
 \ref{suicide_substr} &$\checkmark$&$\checkmark$&$\checkmark$\\

\hline

\end{tabular}}
\end{center}

\subsection{Comparison of the original RN and the reduced RN}

A natural question following the transient approximability of SRNs with non-interacting species is whether the reduced SRN approximates the stationary behavior in the limit as $\eps\to 0$. First one might ask whether states $x=(z,0)\in \Z_{\geq 0}^p\times \{0\}$ and $z$ are of the same type (positive recurrent/null recurrent/transient) for $\mN$ and $\mNuf$, respectively. Furthermore, the  stationary distribution (if it exists) for the scaled SRN $\mN$ as $\eps\to 0$ might match the stationary distribution  of $\mNuf$.  The following examples show that such correspondences do not hold in general, even for reduction by intermediates.

\begin{example} Consider the following SRN $\mN$ with  mass-action kinetics, 
$$S_1\ce{<=>[\kappa_1][\kappa_2]}U_1\ce{->[\kappa_3]} S_2\ce{->[\kappa_4]}S_1,\quad U_1\ce{->[\kappa_5]} S_3.$$
Choosing $\mU=\{U_1\}$, $\mRf=\{r_3\}$, then the reduced SRN $\mNuf$ is
$S_1\ce{<=>[][]}S_2 $ with transition intensities satisfying Assumption \ref{ass1} by Corollary \ref{comp_cor}.
Any $(a,b,c,0)$ with $a+b\geq 1$ is transient for $\mN$, but recurrent for $\mNuf$. Reaction $r_5$ in $\mN$ causes absorption into $(0,0,a+b+c+d,0)$. As $r_5$ is not present in the reduced SRN, this cannot happen in $\mNuf$.
\end{example}

\begin{example}
Consider the SRN $\mN$ with  mass-action kinetics,
$$S_1\ce{<=>[\kappa_1][\kappa_2]}U_1\ce{<=>[\kappa_3][\kappa_4]} S_2$$
Choosing $\mU=\{U_1\}$, $\mRf=\{r_3\}$, then the reduced SRN $\mNuf$  becomes
$S_1\to S_2 $.
The state $(a,b,0)$ with $a\geq 1$ is recurrent for $\mN$, but transient for $\mNuf$. 
\end{example}

The next result follows from \cite[Theorem 5.6]{hoessly2021sum}. We recall that irreducible components of sub-conservative RNs are finite \cite{book_kurtz_crns}. For an irreducible component $E\subseteq\Z^n_{\ge 0}$, define $E_0=\rho_{\mO}(E\cap (\Z_{\geq 0}^p\times \{0\}))$.

\begin{theorem}\label{thm_int}
Let $\mN=(\mC,\mR)$ be a sub-conservative SRN on $\mS$, $\mU\subseteq \mS$  a set of intermediate species,  and $\mRf=\mR_\mU$ a set of  proper reactions.
 Suppose Assumption \ref{ass1} holds. Then $x=(z,0)\in \Z_{\geq 0}^p\times \{0\}$ is transient for $\mN$ if and only if $z$ is transient for $\mNuf$, and the same holds for positive recurrence. 
 In particular, if a subset $E$ is an irreducible component for $\mN$ then $E_0$ is an irreducible component for $\mNuf$.
\end{theorem}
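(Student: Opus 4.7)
The plan is to realize the CTMC of $\mNuf$ as the trace (watched) chain of $\mN$ on the subset $A := \Z^p_{\geq 0}\times\{0\}$ of states with no intermediate molecules, and then transfer the classification properties between the two chains. This argument concerns the unscaled chain $\mN$ itself, so the singular perturbation machinery of Section \ref{lim_stand} plays no direct role here.

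First I would verify that, starting from any $x=(z,0)\in A$, the chain $\mN$ returns to $A$ almost surely in finite expected time. From $(z,0)$, the only firable reactions are those in $\mR\setminus \mRa$: reactions in $\mR\setminus \mRu$ keep the chain in $A$, while reactions in $\mRb\setminus\mRa$ create intermediate molecules and launch an excursion into $A^c$. Because $\mU$ is intermediate and $\mRf=\mRa$ is proper, Lemma \ref{interm_finite_fine_alt} ensures that every excursion can terminate through a fast chain, and sub-conservativity confines the excursion to a finite set of states. The excursion therefore behaves as an absorbing CTMC on a finite state space, guaranteeing almost sure return to $A$ in finite expected time.

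Next I would match the trace chain with the CTMC of $\mNuf$ via first-step analysis. The rate at which the trace chain jumps from $(z,0)$ directly along a reaction $r\in\mR\setminus \mRu$ is simply $\lam_r(z,0)$, while the effective rate of transitioning from $(z,0)$ via an excursion realising a walk $\w\in\mWu$ is exactly $\Lambda_\w(z)$ from \eqref{eq:bigLambda}: the factor $\lam_{L(\gamma_1)}(z,0)$ is the rate of initiating the excursion, and the subsequent product aggregates the conditional probabilities of the fast dynamics following $\w$ to completion, since only reactions in $\mRa$ compete within an excursion. Summing over $\w\in\mWu(r)$ reproduces the rates $\tau_r$ of Definition \ref{defredSnet}, so the trace chain coincides in distribution with the CTMC of $\mNuf$.

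With the trace identification in hand, the classification correspondence is standard: $x=(z,0)$ is transient (resp.\ recurrent) for $\mN$ iff it is visited finitely many (resp.\ infinitely many) times a.s., which by the trace equivalence is equivalent to $z$ being transient (resp.\ recurrent) for $\mNuf$. Positive recurrence transfers because the mean return time of $x$ in $\mN$ equals the trace return time plus the finite mean sojourn in $A^c$, both of which are finite simultaneously. For the irreducible-components statement, a closed communicating class $E$ of $\mN$ projects to $E_0=\rho_\mO(E\cap A)$, which is closed and communicating for $\mNuf$; conversely, the lift of $E_0$ to $\mN$ is obtained by adjoining the finitely many states in $A^c$ reachable via fast chains from $E_0\times\{0\}$. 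The main obstacle is the first-step computation when $\mRfu$ is infinite: absolute convergence of the sum over walks must be justified, which is precisely the content of \cite[Theorem 5.6]{hoessly2021sum}, whose hypotheses match ours.
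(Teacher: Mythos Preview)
The paper gives no proof of its own here; it defers entirely to \cite[Theorem 5.6]{hoessly2021sum}. Your proposal, however, contains a real gap: the claimed identification of $\mNuf$ with the watched chain of the \emph{unscaled} process $\mN$ on $A=\Z^p_{\ge 0}\times\{0\}$ is false. Each denominator in $\Lambda_\w(z)$, see \eqref{eq:bigLambda}, carries only the rates of reactions in $\mF=\mRa$ (the out-edges of $\mGuf$). But in the unscaled chain $\mN$, at a state $(z',e_i)\in A^c$ the reactions of $\mR\setminus\mRa$ with core-only reactant are also enabled---both reactions in $\mR\setminus\mRu$ that move the core coordinates and reactions in $\mRb\setminus\mRa$ that create a \emph{second} intermediate molecule. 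These compete with the reactions of $\mF$ in the embedded jump chain, so your assertion that ``only reactions in $\mRa$ compete within an excursion'' fails, the probability that an excursion of $\mN$ follows a given $\w$ is generically strictly smaller than the product you write, and the resulting watched generator differs from that of $\mNuf$. Note that the watched-chain identity invoked in the proof of Theorem \ref{main_thm_stand} is not with the generator of $\mN$ but with the hybrid generator whose off-$A$ blocks are $\widetilde{Q}_{F_1S_1},\widetilde{Q}_{F_1F_1}$; suppressing the slow dynamics on $A^c$ is precisely what the $\eps\to 0$ scaling achieves, and your argument never invokes that scaling.

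Your overall strategy can be repaired because on a finite state space (guaranteed by sub-conservativity) the transient/positive-recurrent dichotomy is determined by the communicating-class structure alone, not by the numerical rates. What you actually need is that $(z,0)$ and $(z',0)$ communicate in $\mN$ if and only if $z$ and $z'$ communicate in $\mNuf$; this reachability correspondence is what \cite[Theorem 5.6]{hoessly2021sum} provides (it is not merely a convergence lemma for the walk sums, as your last paragraph suggests). The delicate direction is that a path in $\mN$ between two states of $A$ may interleave slow and fast reactions arbitrarily and may carry several intermediate molecules at once; one must show such a path can be reorganized into a succession of reduced reactions of $\mNuf$. For intermediates with $\mF=\mRa$ this reordering goes through because each intermediate sits alone in its complex, but it requires a combinatorial argument that is not delivered by the watched-chain identification you attempt.
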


From Theorem \ref{thm_int}, Lemma \ref{interm_finite_fine_alt} and \cite[Theorem 3]{jia_fin}, we get convergence of the stationary distribution.

\begin{corollary}
Let $\mN=(\mC,\mR)$ be a sub-conservative SRN on $\mS$, $\mU\subseteq \mS$  a set of intermediate species,  and $\mRf=\mR_\mU$ a set of fast proper reactions.
 Suppose Assumption \ref{ass1} holds. 
Let $E$ be an irreducible component, and let $E_0$ be the corresponding projected set. 
Furthermore, denote by $\pi_\eps$ the unique stationary distribution of $Q_\eps$ on $E$, and let $\pi_0$ be the unique stationary distribution of the reduced SRN on $E_0$. Then, 
$\pi_\eps((z,0))\to \pi_0(z)$,  for $z\in E_0,$ and $\eps\to 0.$
\end{corollary}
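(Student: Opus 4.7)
The plan is to combine three ingredients: the finiteness of the relevant state spaces afforded by sub-conservativity, the structural correspondence between irreducible components of $\mN$ and $\mNuf$ given by Theorem \ref{thm_int}, and the quantitative singular perturbation result \cite[Theorem 3]{jia_fin} for stationary distributions of finite-state CTMCs whose generator splits as the sum of a fast and a slow part.

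First I would verify the preliminaries. Since $\mN$ is sub-conservative and $E$ is an irreducible component, $E$ is finite, so the restriction of $Q^\eps$ to $E$ is the generator of an irreducible CTMC on a finite state space and admits a unique stationary distribution $\pi_\eps$ for every $\eps>0$. Theorem \ref{thm_int} identifies $E_0=\rho_\mO(E\cap(\Z_{\geq 0}^p\times\{0\}))$ as an irreducible component of $\mNuf$, and since $|E_0|\le|E|<\infty$, $\pi_0$ exists and is unique on $E_0$. Lemma \ref{interm_finite_fine_alt} ensures that \eqref{eq:fin_out3} holds for every $z\in E_0$, hence Assumption \ref{ass2} is in force on $E$.

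Next I would unpack the singular perturbation structure. Writing $Q^\eps=\frac{1}{\eps}\widetilde Q+\widehat Q$, the intermediate property of $\mU$ combined with Assumption \ref{ass2} implies that $E\cap(\Z_{\geq 0}^p\times\{0\})$ is precisely the set of absorbing states of $\widetilde Q|_E$, while every other state of $E$ is transient under $\widetilde Q|_E$ and reaches the absorbing set in finite time almost surely. This is the standard setting for \cite[Theorem 3]{jia_fin}: $\pi_\eps$ concentrates on the absorbing set of $\widetilde Q$ as $\eps\to 0$, and the normalised restriction converges to the stationary distribution of a reduced generator $\bar Q$ on this set, obtained by averaging $\widehat Q$-transitions with respect to the hitting distributions of $\widetilde Q$ from the non-absorbing states.

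The remaining task is to identify $\bar Q$ with $Q^0|_{E_0}$ under the bijection $\rho_\mO\colon E\cap(\Z_{\geq 0}^p\times\{0\})\to E_0$. By Definition \ref{defredSnet} and \eqref{eq:bigLambda}, each $\Lambda_\w(z)$ factors as a slow rate $\lam_{L(\gamma_1)}(z,0)$ times a product of conditional probabilities of following the successive $\widetilde Q$-edges along $\w$; summing over $\w\in\mWu(r)$ telescopes these products into the total hitting probability of the $\widetilde Q$-absorbing state determined by $r$, which is exactly the averaging formula produced by the cited theorem. Uniqueness of the stationary distribution of $\bar Q$ on the absorbing set then yields $\pi_\eps((z,0))\to\pi_0(z)$ for every $z\in E_0$. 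I anticipate the main obstacle to be the bookkeeping over $\mWu(r)$, which can be countably infinite per $r$ (as in Example \ref{ex_inf1}); justifying the rearrangement of the infinite sums will require a careful appeal to \eqref{eq:lambda} together with the finiteness of $E_0$.
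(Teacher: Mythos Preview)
Your proposal is correct and follows essentially the same approach as the paper, which simply states that the corollary follows from Theorem \ref{thm_int}, Lemma \ref{interm_finite_fine_alt}, and \cite[Theorem 3]{jia_fin}. Your elaboration supplies the details the paper omits: the identification of the limiting generator with $Q^0$ that you outline in your last paragraph is precisely the watched-chain argument carried out in the proof of Theorem \ref{main_thm_stand}, so you could shorten that step by referring to it directly rather than redoing the bookkeeping over $\mWu(r)$.
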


\section{Relation to previous work and discussion}\label{disc}

We compare our setting for reduction of SRNs with non-interacting species to other approaches of model simplification.
Then we discuss assumptions, extensions and examples that are not covered by our approach.

In our treatment, we consider  SRNs with fast reactions consuming non-interacting species, generalizing earlier settings  in three ways: 1) extending from intermediate species to non-interacting species, 2) allowing arbitrary transition intensities satisfying a compatibility condition, and 3) allowing non-homogeneous Markov dynamics, that is, time-dependent transition intensities. As the set-up is based on \cite{yin_zhang_trans,yin2012continuous}, we loose the possibility to study multiple time scales,  but   are restricted to two scales.

Reductions of SRNs with non-interacting species resembles reduction by intermediate species  as intermediate species form a special class of non-interacting species \cite{CW16}.  In contrast to our setting, \cite{CW16} allows a multi-scale setting. Other  multi-scale reductions require the parameters to fulfill a \emph{balancing equation} between scales of species concentrations and scales of transition intensities \cite{kurtz2,approx_kurtz,BKPR06,KK13,Schnoerr_2017}.  This is not required in our setting (nor in \cite{CW16} for the  reactions involving intermediate species), in fact such equation will not hold. Balancing is violated because  fast reactions do not have  `limits'  themselves as $\eps\to 0$, but only  jointly through contraction of sequences of reactions. 
Furthermore, at the process-level, we do not have  convergence in  the Skorohod topology, see \cite[Example 5.3]{CW16} or \cite[$\S$ 6.5]{KK13}, because even for small $\epsilon$, molecules of non-interacting species are created and exist for small amounts of time, while this is not possible in the reduced SRN. Convergence in Skorohod topology typically require that species abundance is measured in concentrations, rather than in molecule numbers.

The results on the transient dynamics in our two-scale setting could potentially be extended by allowing 
general infinite state space (that is, weaken  Assumption \ref{ass2}). In so,  the transition rates of the original CTMC become unbounded for mass-action kinetics, and the theory on singular perturbations for CTMCs are not applicable \cite{yin_zhang_trans,yin2012continuous}. Furthermore, the original process might have sample paths that diverge to infinity in a finite amount of time.  Hence,  
 theory applicable to unbounded cases as well as conditions ensuring non-explositivity would be of interest to develop.  We note that even 1-dimensional CTMCs arising from RNs can have both positive recurrent and explosive irreducible components \cite{xu2019dynamics}, implying that different parts of the state space might have to be analyzed separately.  

  Another generalization would be to allow many scales rather than two scales. This could be done using \cite{jia_fin} in the finite state space case. However, it becomes difficult to find the  reduced reactions, as these depend on the concrete form of the transition intensities and their scalings. Also, a reduction might not be  interpreted as SRNs fulfilling Assumption \ref{ass1}.  As an example, consider
 $$
S_1\ce{->[\k_1]}U_1\ce{->[\frac{1}{\eps}\k_2]}S_2,\quad U_1+S_3\ce{->[\frac{1}{\eps^2}\k_3]}S_4
 $$
 with stochastic mass-action kinetics.
The reduction for small $\eps$ can be written as two reactions
 $$S_1\ce{->[]}S_2,\quad S_1+S_3\ce{->[]}S_4
 $$
 with transition intensities $\k_1z_{S_1}\1_{\{0\}}(z_{S_3})$ for the first reaction and $\k_1z_{S_1}\1_{\N\setminus\{0\}}(z_{S_3})$ for the second. In particular, the first transition intensity invalidates Assumption \ref{ass1}.

In our approach, it is important that non-interacting species are produced and degraded.
 Consider the SRN with reactions
$$U_1\ce{<=>[\kappa_1][\kappa_2]}U_2,\quad S_1+U_1\ce{<=>[\kappa_3][\kappa_4]}S_2+U_2.$$
Neither $U_1$ nor $U_2$ are produced nor degraded, hence there does not exist a proper set of reactions $\mF$ and the example falls outside our setting. As a matter of fact, the scaling parameter $\eps$ would apply uniformly to all reactions as they all transform one non-interacting species into another. Hence, the distribution of the CTMC approaches the stationary distribution within a short time span (for small $\eps$). This distribution is not concentrated on the part of the state space without molecules of non-interacting species. Rescaling time by $\eps$ would retrieve the original chain.

\section*{Acknowledgement}
LH acknowledges funding from the Swiss National Science Foundations Early Postdoc.Mobility grant (P2FRP2\_188023). The work presented in this article is supported by Novo Nordisk Foundation, grant NNF19OC0058354.

\appendix
\section{}
\label{additional_vers}

The following lemma  proves Proposition \ref{suff_cond_fin_alt} on sufficient conditions for Assumption \ref{ass2}.

\begin{lemma}
Assume $\mN=(\mC,\mR)$ is a sub-conservative SRN on a species set $\mS$ with transition intensities $\lambda_r$, $r\in\mR$, that  $\mU\subseteq\mS$ is a set of non-interacting species, and $\mF\subseteq\mR_\mU$ is a proper set of fast reactions.  Suppose Assumption \ref{ass1} holds.
Then either of the following are sufficient for Assumption \ref{ass2} to be satisfied for an arbitrary closed set  in a (directed) stoichiometric compatibility class:
\begin{itemize}
\item[(a)] $\mRf\cup (\mRb\setminus\mRa)$ is weakly reversible.
\item[(b)] $\mRf\cap \mRa\cap\mRb$ is weakly reversible, and for any reaction $r\in \mRb\setminus\mRa$, the inverse reaction is in $\mRf$.
\item[(c)] $\mN$ is weakly reversible and $\mF=\mRa$.
\end{itemize}
\end{lemma}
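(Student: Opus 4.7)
The plan is to separate Assumption \ref{ass2} into (i) finiteness of $E$, and (ii) the balance equation, and then to reinterpret (ii) as a probability-one absorption statement for an auxiliary finite CTMC, which I will verify separately under each of the three hypotheses.

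Finiteness of $E$ follows directly from sub-conservativity, since a strictly positive conservation vector confines any closed subset of a directed stoichiometric compatibility class to a bounded, hence finite, subset of $\Z^n_{\ge 0}$. For the balance equation, I decompose the walk sum by its first edge,
\[
\sum_{\w\in\mWu}\Lambda_\w(z)\;=\;\sum_{r_0\colon y_0\to y_0'\,\in\,\mRb\setminus\mRa}\lambda_{r_0}(z,0)\, p_{r_0}(z),
\]
where $p_{r_0}(z)$ is the probability that the CTMC driven solely by the reactions in $\mF$, started at $x_0:=(z,0)+y_0'-y_0$, eventually reaches a state with no non-interacting molecule. This identification is valid because every complex contains at most one non-interacting molecule (by the non-interacting definition), so this ``fast subchain'' carries exactly one non-interacting molecule at every non-absorbed state, and the product in \eqref{eq:bigLambda} is precisely its embedded path probability. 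The balance equation is therefore equivalent to $p_{r_0}(z)=1$ whenever $\lambda_{r_0}(z,0)>0$; and since the subchain lives in the finite set $E$, the standard absorption criterion for finite CTMCs reduces this to the structural claim that, from every state $(z^*,e_j)$ reachable from $x_0$ through firable $\mF$-reactions, there exists a further sequence of firable $\mF$-reactions ending in an exit reaction of $\mRf\setminus\mRb$ (an edge $U\to *_{out}$ in $\mGuf$).

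To establish this claim I rely on the elementary observation that, by Assumption \ref{ass1}, if a reaction $r\colon y\to y'$ fires at state $x$ and the reverse $y'\to y$ also lies in $\mR$, then this reverse is firable at the post-state $x+y'-y$ since $x\ge y$. Under \emph{(b)}, the inverse $r_0^{-1}\colon y_0'\to y_0$ belongs to $\mF$ by hypothesis and is firable at $x_0$, giving direct absorption from $x_0$; for a general reachable $(z^*,e_j)$, the path $x_0\to\cdots\to(z^*,e_j)$ can only use reactions of $\mRf\cap\mRa\cap\mRb$ (any $\mRf\setminus\mRb$ step would already have led to absorption), so weak reversibility of $\mRf\cap\mRa\cap\mRb$ supplies a firable return trajectory from $(z^*,e_j)$ back to $x_0$, after which $r_0^{-1}$ fires. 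Under \emph{(a)}, I pick any $\mF$-reaction firable at $(z^*,e_j)$ and place it on a cycle in $\mRf\cup(\mRb\setminus\mRa)$ guaranteed by weak reversibility; using that every complex has non-interacting count $0$ or $1$, together with properness of $\mF$, the cycle forces the existence, in the induced walk on $\mGuf$, of an edge $U_{j'}\to *_{out}$ reachable from $U_j$, and tracing that walk while invoking Lemma \ref{non-neg_min} and the zero net stoichiometric change of cycles upgrades it to a firable $\mF$-sequence. Case \emph{(c)} is handled by the same cycle argument applied to $\mN$ itself, noting that with $\mF=\mRa$ the only slow reactions possibly appearing along a cycle are ``core-only'' ones, which do not affect the non-interacting count and can be bypassed when reconstructing a purely fast exit walk from $(z^*,e_j)$.

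The principal obstacle, common to all three cases, is not the graph-theoretic reachability of $*_{out}$ from $U_j$ in $\mGuf$ (which follows readily from properness combined with the respective weak-reversibility hypothesis), but rather the lift from such a walk to an actually firable CTMC trajectory, i.e., the preservation of non-negativity of core-species counts along the way. This is where Assumption \ref{ass1}, the ``reverse-of-just-fired is firable'' observation, and the zero net stoichiometric change around cycles work together: inside the finite closed set $E$, any transient shortage of reactants along a cycle can be resolved by first returning to a sufficiently rich state via a suitable sub-cycle before attempting the exit.
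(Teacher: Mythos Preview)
Your reduction of Assumption~\ref{ass2} to an almost-sure absorption statement for the jump chain of $\mN|_\mF$, started at $x_0=(z,0)+y_0'-y_0$, is exactly the paper's route, and your treatment of case~(b) is correct and in fact more explicit than the paper's (which simply declares (b) and (c) ``similar to (a)'').

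Your arguments for (a) and (c), however, have a real gap. You pick an $\mF$-reaction firable at $(z^*,e_j)$ and place it on a cycle of the weakly reversible set, expecting the cycle---together with properness---to produce an edge $U_{j'}\to *_{out}$ in $\mGuf$ reachable from $U_j$. But the linkage class of the chosen reaction may consist entirely of complexes with non-interacting count $1$: for instance, take $\mU=\{U\}$, $\mF=\mRa$, and reactions $S_3\leftrightarrow U$ together with $S_1+U\leftrightarrow S_2+U$. The cycle through $S_1+U\to S_2+U$ never touches count $0$, so it contributes no exit edge. Properness does supply a $\mGuf$-path from $U$ to $*_{out}$, but that path lives in a \emph{different} linkage class of $\mRf\cup(\mRb\setminus\mRa)$, and you give no mechanism for reaching it from $(z^*,e_j)$ with the available molecule counts; ``returning to a sufficiently rich state via a suitable sub-cycle'' does not help, since sub-cycles within a count-$1$ linkage class leave the state unchanged. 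The same obstruction arises in (c), and ``bypassing core-only reactions'' does not address it.

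The paper sidesteps this entirely by using the forward history rather than searching for a fresh exit. The state $(z^*,e_j)$ was reached from $(z,0)$ by firing $r\in\mRb\setminus\mRa$ followed by $f_1,\ldots,f_k\in\mF$, all of which lie in the weakly reversible set. For weakly reversible RNs under Assumption~\ref{ass1}, state-space reachability is symmetric (this is precisely the iterated ``reverse-of-just-fired is firable'' observation you mention, applied along a complex-path back from each product to its reactant). Hence there is a firable sequence in $\mRf\cup(\mRb\setminus\mRa)$ from $(z^*,e_j)$ back toward $(z,0)$. Along this return sequence, every step taken from a count-$1$ state has a count-$1$ source complex, so lies in $\mRa$ and therefore in $\mF$; and since the forward path originated at the count-$0$ state $(z,0)$, the return must hit count $0$, giving absorption. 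The same reasoning handles (c) verbatim with $\mF=\mRa$. So the fix is simply to reverse the known forward path instead of hunting for an independent cycle.
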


\begin{proof}
As $\mN$ is sub-conservative, any closed set of a directed stoichiometric compatibility class is finite.
Hence, it is enough to show that  \eqref{eq:fin_out3} holds for arbitrary $x=(z,0)\in\Z_{\geq0}^p\times \{0\}$. 
As  \eqref{eq:lambda} holds by assumption, it is enough to show that for  arbitrary $r\in \mRb\setminus \mRa$ with $ \lam_{r}(z,0)\neq 0$ the following holds:
$$ \sum\limits_{\w\in \mWu\st \w\text{ starts with }r}\Lambda_{\w}(z)= \lam_{r}(z,0).$$

Consider the SRN with species set $\mS$ and reactions $\mF$, denoted $\mN|_\mF$, that we take with the same transition intensities as $\mN$.  Consider the associated discrete-time Markov chain (DTMC) from the jump chain of $\mN|_\mF$ (cf. \cite[p. 82]{norris}), which is the sequence of states taken by the CTMC. This will be denoted by $(Y_n)_{n\in\N}$.
By definition of $\Lambda_{\w}(\cdot)$, $\frac{\Lambda_{\w}(z)}{\lam_{r}(z,0)}$ corresponds to the probability that the transitions of the jump chain $(Y_n)_{n\in\N}$ when starting from $(z,0)+y'-y$ comes from the sequence of reactions in $\w$.
 
 Denote the set of reachable states from $(z,0)+y'-y$ via $\mN|_\mF$ by $\mN|_\mF((z,0)+y'-y)$. As $\mN$ is sub-conservative, $\mN|_\mF$ is sub-conservative and $\mN|_\mF((z,0)+y'-y)$ is finite. 
 In the following, let $e_i\in \Z^n_{\geq 0}$ denote the vector with $U_i$-coordinate entry equal to one and all other entries equal to zero.

(a) Assume $\mRf\cup (\mRb\setminus\mRa)$ is weakly reversible. Then, if $(z\p,0)+e_i\in \mN|_\mF((z,0)+y'-y)$, then also $(z,0)\in \mN|_\mF((z\p,0)+e_i)$. Hence, any such $(z\p,0)+e_i$ is transient in the  jump chain $(Y_n)_{n\in\N}$ of $\mN|_\mF$ . On the other hand by definition, any state of the form $(z',0)$ is absorbing for the jump chain of $\mN|_\mF$. As the set of reachable states is finite, the time until absorption is a.s. finite. So overall the jump chain ends in one of the absorbing states after a finite time, and 
$$\sum\limits_{\w\in \mWu\st \w\text{ starts with }r}\frac{\Lambda_{\w}(z)}{\lam_{r}(z,0)}=1.$$

For (b) and (c), the proofs are similar to (a).
\end{proof}

\color{black}

\section{Proof of Theorem \ref{main_thm_stand}}\label{app}

The proof of Theorem \ref{main_thm_stand} is based on singularly perturbed and watched Markov chains. We therefore first introduce these concepts for the convenience of the reader.

\subsection{Singularly perturbed CTMCs}\label{sing_pert}
Singular perturbation theory for Markov chains is a well-developed topic in probability theory.
For the convenience of the reader we restate and summarize results of \cite{yin_zhang_trans,yin2012continuous} for homogeneous CTMCs on a finite state space, using their notation. For more on the two-scale setting for CTMCs, we refer to \cite{pavli,yin_zhang_trans,yin2012continuous}. 

 The setting we introduce has a Markov chain with fast and slow components subject to so-called weak and strong interactions, where the fast components consist of absorbing, weakly irreducible or transient states of the fast dynamics. These cases are treated in detail in \cite[$\S$ 4.3, $\S$ 4.4, $\S$ 4.5]{yin2012continuous}. 

Let a sequence of homogeneous generators of a CTMC be given by
$$Q^\eps=\frac{1}{\eps}\widetilde{Q}+\widehat{Q},\quad \eps>0,$$
on a finite state space $\I$. By rearrangement, the states can be divided into blocks of states $\I_A,\I_W,\I_T$ according to their role in $\widetilde{Q}$, which are absorbing, irreducible, and transient, respectively. The matrices $\widetilde{Q},\widehat{Q}$ are partitioned accordingly into block-matrices and take the form
\begin{equation}\label{form_matrices_Yin}
\widetilde{Q} =
\begin{pmatrix}
\widetilde{Q}_{AA} & \widetilde{Q}_{AW} & \widetilde{Q}_{AT} \\
\widetilde{Q}_{WA} & \widetilde{Q}_{WW} & \widetilde{Q}_{WT} \\
\widetilde{Q}_{TA} & \widetilde{Q}_{TW}  &\widetilde{Q}_{TT} 
\end{pmatrix}=\begin{pmatrix}
0 & 0 & 0 \\
0 & \widetilde{Q}_{WW} & 0 \\
\widetilde{Q}_{TA} & \widetilde{Q}_{TW}  &\widetilde{Q}_{TT} 
\end{pmatrix},
\end{equation}
\begin{equation*}
\widehat{Q} =
\begin{pmatrix}
\widehat{Q}_{AA} & \widehat{Q}_{AW} & \widehat{Q}_{AT} \\
\widehat{Q}_{WA} & \widehat{Q}_{WW} & \widehat{Q}_{WT} \\
\widehat{Q}_{TA} & \widehat{Q}_{TW}  &\widehat{Q}_{TT} 
\end{pmatrix}.
\end{equation*}

The states in $\I_A$ with $\widetilde{Q}_{AA}= 0,\widetilde{Q}_{AW}=0,\widetilde{Q}_{AT}=0$ correspond to absorbing states of $\widetilde{Q}$. The states in $\I_T$ correspond to transient states of $\widetilde{Q}$ (hence $\widetilde{Q}_{TT}$ is a Hurwitz matrix, furthermore stable and non-singular see Lemma \ref{trans_stable_non}). Finally, $\I_W$ consists of the states that are part of (non-trivial) closed irreducible components, so $ \widetilde{Q}_{WW}$ in \eqref{form_matrices_Yin} can  further be decomposed as
\begin{equation}\label{form_irred_mat}
\widetilde{Q}_{WW} = \text{diag}(\widetilde{Q}^1,\widetilde{Q}^2,\cdots, \widetilde{Q}^l)=
\begin{pmatrix}
\widetilde{Q}^1 & & & \\
 &\widetilde{Q}^2& & \\
  && \ddots & \\
   && & \widetilde{Q}^l
\end{pmatrix},
\end{equation}
where $\widetilde{Q}^i$ is an $m_i\times m_i$ matrix.
We denote by $\nu^i$ the stationary distribution  of $\widetilde{Q}^i$ for $i=1,\ldots,l$ (which exist by assumptions on the state space).
\begin{remark}
 Our presentation differs slightly from 
\cite{yin2012continuous}. In that work, so-called weakly irreducible classes is used \cite[Definition 2.7a]{yin2012continuous}; classes containing exactly one closed irreducible component and possibly other states that lead to this component (which are then transient) in the case of time-homogeneous CTMCs. 
As we include transient states of $\widetilde{Q}$ via $\I_T$, this is not a  restriction. We note that our setting is included in \cite[$\S$ 4.5]{yin2012continuous}.
\end{remark}

For $\epsilon>0$, the forward equation of the CTMC gives an ODE
\begin{equation*}
 \frac{p^\epsilon(t)}{dt}=p(t)Q^\epsilon, \quad p(0)=\pi_0,
\end{equation*}
where $\pi_0$ is the initial probability distribution.

In \cite{yin2012continuous}, they construct sequences of functions that approximate $p^\eps(t)$ in the limit for $\eps\to 0$ uniformly on $[0,T]$ for different powers of $\eps$. One contribution comes from the so-called outer expansion, that approximates $p^\eps(t)$ for $t>0$. Another part comes from the so-called initial-layer correction, that approximates $p^\eps(t)$ in a neighborhood of $t=0$. This might be ignored as it is zero in our case \cite{yin2012continuous}. Hence, we will focus on the zeroth order outer expansion as this is enough to obtain an approximation to order $O(\epsilon)$ (see Proposition \ref{prop_main1}).

Following \cite[
Theorem 4.45]{yin2012continuous}, the distribution of the CTMC  converges to the zeroth order outer expansion $\varphi_0(t)$ for $t>0$. In the case we consider, it is given as 
\begin{equation*}
\begin{split}
\varphi_0(t) &= (\varphi_A(t),\varphi_W(t),\varphi_T(t))=(\varphi_A(t),\varphi_W(t),0_T), \\
&=(\varphi_A(t),\vartheta^1(t)\nu^1,\cdots, \vartheta^l(t)\nu^l,0_T),
\end{split}
\end{equation*}
where  $\vartheta^i(t)$ are scalar functions and   $\nu^i$ are the stationary distributions from matrix \eqref{form_irred_mat}, and $0_T$ is the vector of zero-entries.
Then, let  (with $m_0=0$)  
\begin{equation*}
\widetilde{\1}=\text{diag}(\widetilde{\1}_{m_1},\cdots,\widetilde{\1}_{m_l} )
\end{equation*}
where 
\begin{equation*}
\widetilde{\1}_{m_i} =(\underbrace{0,\cdots,0}_{m_1+\cdots+ m_{i-1}}\underbrace{1,\cdots,1}_{m_i},\underbrace{0,\cdots,0}_{m_{i+1}+\cdots +m_l})^\top, \quad i=1,\ldots,l,
\end{equation*}
and define the matrix \begin{equation}\label{eq_gen_red}
\overline{Q}_\ast=\text{diag}(\underbrace{1,\cdots,1}_{|\I_A|},\nu^1,\cdots ,\nu^l)
\bigg (
\begin{pmatrix}
\widehat{Q}_{AA} & \widehat{Q}_{AW}  \\
\widehat{Q}_{WA} & \widehat{Q}_{WW} 
\end{pmatrix}+
\begin{pmatrix}
\widehat{Q}_{AT}   \\
\widehat{Q}_{WT}  
\end{pmatrix}
\widetilde{Q}^{-1}_{TT} 
\begin{pmatrix}
\widetilde{Q}_{TA}   & \widetilde{Q}_{TW}  
\end{pmatrix}
\bigg) \text{diag}(\underbrace{1,\cdots,1}_{|\I_A|},\widetilde{\1}).
\end{equation}
Note that $\overline{Q}_\ast$ is not the same as $Q^0$ (which was previously defined for SRNs).

\begin{remark}
The matrices in the product above have different dimensions, i.e. $\text{diag}(1,\cdots,1,\nu^1,\cdots ,\nu^l)$ is a $(|\I_A|+l)\times (|\I_A|+|\I_W|)$ matrix, the middle matrix is a $ (|\I_A|+|\I_W|)\times (|\I_A|+|\I_W|)$ matrix whereas $\text{diag}(1,\cdots,1,\widetilde{\1})$ is a $(|\I_A|+|\I_W|)\times (|\I_A|+l)$ matrix.
\end{remark}

Then the zeroth order outer expansion $\varphi_0(t)$ (corresponding to \cite[(4.86)]{yin2012continuous}) with initial distribution $\pi_0=(p_A,p_W,p_T)$ is determined by the following ODE,
\begin{equation}\label{eq_ODE_general}\left\{
\begin{split}
\frac{d}{dt}(\varphi_A(t),\vartheta^1(t),\cdots, \vartheta^l(t)) &=(\varphi_A(t),\vartheta^1(t),\cdots, \vartheta^l(t))\overline{Q}_\ast, \\
\varphi_A(0) &= p_A + p_T(\widetilde{Q}_{TT})^{-1}\widetilde{Q}_{TA},\\
(\vartheta^1(0),\cdots, \vartheta^l(0)) &= \big (p_W+p_T(\widetilde{Q}_{TT})^{-1}\widetilde{Q}_{TW} \big)\widetilde{\1},
\end{split}\right.
\end{equation}
which defines the generator of a CTMC that is given by $\overline{Q}_*$. This CTMC has reduced state space $\I_A\cup\{s_{1},\cdots ,s_{l}\}$,  where each state $s_i$ corresponds to the assembled irreducible component,  which has stationary distribution $\nu^i$  from $\widetilde{Q}_{WW}$,  see  \eqref{form_irred_mat}. Then, denoting the projection onto the reduced state space of the initial value of   \eqref{eq_ODE_general}
by $\text{Pr}(\pi_0):=(\varphi_A(0),\vartheta^1(0),\cdots, \vartheta^l(0))$, we get a solution 
$$(\varphi_A(t),\vartheta^1(t),\cdots, \vartheta^l(t))=\text{Pr}(\pi_0)e^{\overline{Q}_\ast t}.$$
Denote the sequence of CTMCs associated to $Q^\eps$ by $X_\eps(t)$ and the CTMC from $\overline{Q}_\ast$ by $X_0(t)$. 
Under these conditions the following holds \cite[Theorem 4.45]{yin2012continuous}.

\begin{proposition}\label{prop_main1}
Let $\pi_0$ be a probability distribution on $\I_A\cup \I_W\cup\I_T$ with support on $\I_A$.  Then, for all $T>0$ and all $B\subseteq \I_A$:
$$\sup_{t\in[0,T]}|P_{\pi_0}(X_\eps(t)\in B)-P_{\text{Pr}(\pi_0)}(X_0(t)\in B)|=O(\epsilon),\quad\text{for }\epsilon\to 0.$$
\end{proposition}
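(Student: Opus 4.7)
The plan is to apply \cite[Theorem 4.45]{yin2012continuous} directly, after verifying that our set-up matches its hypotheses and that the additional restriction $\supp(\pi_0)\subseteq\I_A$ trivializes the initial-layer correction.

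First I would verify the structural hypotheses. Since $\I$ is finite and the partition $\I=\I_A\cup\I_W\cup\I_T$ is chosen precisely by the role of states under $\widetilde Q$, the matrices $\widetilde Q,\widehat Q$ already have the block form of \eqref{form_matrices_Yin}--\eqref{form_irred_mat}. The transient block $\widetilde Q_{TT}$ is invertible (Hurwitz), so \eqref{eq_gen_red} is well defined and $\overline Q_*$ is a genuine $Q$-matrix on the reduced state space $\I_A\cup\{s_1,\dots,s_l\}$.

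Second, I would invoke the asymptotic expansion for $p^\eps(t):=P_{\pi_0}(X_\eps(t)=\cdot)$ from \cite{yin2012continuous}, which writes $p^\eps(t)=\varphi_0(t)+\psi_0(t/\eps)+O(\eps)$ uniformly on $[0,T]$, where $\varphi_0$ is the zeroth-order outer expansion and $\psi_0$ is the initial-layer correction responsible for rapid equilibration of initial mass located on $\I_W\cup\I_T$. Since $\supp(\pi_0)\subseteq\I_A$ gives $p_W=0$ and $p_T=0$ in the decomposition $\pi_0=(p_A,p_W,p_T)$, the correction $\psi_0$ vanishes identically. The boundary data in \eqref{eq_ODE_general} then reduce to $\varphi_A(0)=p_A$ and $(\vartheta^1(0),\dots,\vartheta^l(0))=(0,\dots,0)$, which is precisely $\text{Pr}(\pi_0)$. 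The system \eqref{eq_ODE_general} is the Kolmogorov forward equation on $\I_A\cup\{s_1,\dots,s_l\}$ with generator $\overline Q_*$, so $\varphi_0(t)=\text{Pr}(\pi_0)\,e^{\overline Q_* t}$, extended by the stationary weights $\nu^i$ on each aggregate block.

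Third, for $B\subseteq\I_A$ both probabilities reduce to sums of coordinates supported on $\I_A$:
\[
P_{\pi_0}(X_\eps(t)\in B)=\sum_{x\in B}p^\eps_x(t),\qquad P_{\text{Pr}(\pi_0)}(X_0(t)\in B)=\sum_{x\in B}(\varphi_0)_x(t),
\]
and the uniform $\ell^1$-bound on $p^\eps-\varphi_0$ from \cite[Theorem 4.45]{yin2012continuous} transfers directly to give the claimed $O(\eps)$ uniform estimate on $[0,T]$.

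The main obstacle is essentially bookkeeping: checking that the initial-layer correction truly vanishes when $\supp(\pi_0)\subseteq\I_A$ (which rests on the fact that states in $\I_A$ are fixed points of $\widetilde Q$, so there is nothing to equilibrate on the fast time scale), and matching the Yin--Zhang outer expansion to the explicit form of $\overline Q_*$ in \eqref{eq_gen_red}. The latter is a direct algebraic check using $\nu^i\widetilde Q^i=0$ and the observation that $\widetilde{\1}_{m_i}$ aggregates the $i$-th irreducible class into its representative state $s_i$.
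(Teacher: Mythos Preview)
Your proposal is correct and matches the paper's treatment exactly: the paper does not give an independent proof of this proposition but simply cites \cite[Theorem 4.45]{yin2012continuous} and remarks that the initial-layer correction vanishes because the initial distribution is supported on $\I_A$. Your write-up in fact supplies more detail than the paper does, but the route---invoke the Yin--Zhang asymptotic expansion, use $\supp(\pi_0)\subseteq\I_A$ to kill $\psi_0$, and identify the outer expansion with the forward equation for $\overline Q_*$---is precisely the intended one.
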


The contribution of the zeroth order outer expansion is sufficient to approximate the above events to order $O(\epsilon)$ as the initial-layer corrections are zero \cite{yin2012continuous}. Hence,  the CTMC generated by $\overline{Q}_*$ can be seen as the limit CTMC (in the sense of Proposition \ref{prop_main1}). The above will be the main reference of this section for the proof of Theorem \ref{main_thm_stand}.

\begin{remark}
 \label{rem_time_dep}
In the inhomogeneous case, the generator of the scaled CTMC of \cite{yin2012continuous} has the form $Q^\eps_t=\frac{1}{\eps}\widetilde{Q}_t+\widehat{Q}_t$, where $\widetilde{Q}_t,\widehat{Q}_t$ satisfy some regularity conditions on $t\in [0,T]$. The conclusions we have stated are essentially the same in this case. In particular,  the error $O(\epsilon)$ for the zeroth order outer expansion is as in Proposition \ref{prop_main1}.
\end{remark}

\subsection{Watched CTMCs}\label{stoch_compl}

We introduce watched CTMCs on a finite state space. Watched Markov chains appear as a restriction of a bigger Markov chain, which we observe only if the chain is in a specific subset of the state space \cite{kemenyfinite,Meyer}. 

In the following, we consider a CTMC  $(X(t))_{t\ge 0}$  with generator $Q$ on a finite state space $S$ with $E_1\subseteq S$ a subset of the state space.
Writing the $Q$-matrix in block-form according to the sets $E_1$ and $E_2 =S\setminus E_1$, we decompose $Q$ as  
\begin{equation*}\label{form_start}
Q =
\begin{pmatrix}
Q_{E_1E_1} & Q_{E_1E_2} \\ Q_{E_2E_1} & Q_{E_2E_2}
\end{pmatrix}.
\end{equation*}

\begin{lemma} \label{trans_stable_non}
If under $$Q\p:=
\begin{pmatrix}
Q_{E_1E_1} & Q_{E_1E_2} \\ 0 & 0
\end{pmatrix}
$$
 all states in $E_1$ are transient, 
then $Q_{E_1E_1}$ is stable and non-singular.
\end{lemma}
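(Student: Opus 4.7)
\medskip
\noindent\textbf{Proof plan.}
The plan is to interpret $Q_{E_1E_1}$ as the generator of the sub-Markov process obtained from $X$ (with generator $Q'$) by killing upon exit from $E_1$, and to use the transience hypothesis to show the corresponding sub-semigroup $e^{Q_{E_1E_1}t}$ decays to zero. Stability and non-singularity of $Q_{E_1E_1}$ will then follow from standard linear algebra.

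First, I would record the standard probabilistic identification: since under $Q'$ every state of $E_2$ is absorbing, the off-diagonal block $Q_{E_1E_2}$ acts as an exit rate to an absorbing graveyard, so for $i,j\in E_1$ one has
\[
(e^{Q_{E_1E_1}t})_{ij} \;=\; P_i\bigl(X(t)=j,\; \tau > t\bigr),
\]
where $\tau:=\inf\{s\ge 0 : X(s)\in E_2\}$ is the first hitting time of $E_2$. Summing over $j\in E_1$ gives $\sum_{j\in E_1}(e^{Q_{E_1E_1}t})_{ij}=P_i(\tau>t)$, which is monotonically non-increasing in $t$.

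Next, I would invoke the transience hypothesis. On the finite state space $S$, every transient state is a.s.\ left in finite time; since all states of $E_1$ are transient under $Q'$ and all states of $E_2$ are absorbing under $Q'$, we get $P_i(\tau<\infty)=1$ for every $i\in E_1$. Monotone convergence gives $P_i(\tau>t)\downarrow 0$ as $t\to\infty$, hence $\|e^{Q_{E_1E_1}t}\|_\infty\to 0$.

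Finally, I would conclude by spectral reasoning: if $\lambda$ were an eigenvalue of $Q_{E_1E_1}$ with $\mathrm{Re}\,\lambda\ge 0$ and $v\neq 0$ a corresponding (possibly complex) eigenvector, then $\|e^{Q_{E_1E_1}t}v\|=|e^{\lambda t}|\,\|v\|\ge \|v\|$ for all $t\ge 0$, contradicting $\|e^{Q_{E_1E_1}t}\|\to 0$. Hence every eigenvalue of $Q_{E_1E_1}$ has strictly negative real part, which is precisely stability (Hurwitz), and in particular $0\notin\mathrm{spec}(Q_{E_1E_1})$, so $Q_{E_1E_1}$ is non-singular. The only non-routine step is the probabilistic identification in the first paragraph; everything else is a standard finite-dimensional argument.
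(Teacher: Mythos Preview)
Your argument is correct. You identify $e^{Q_{E_1E_1}t}$ with the sub-Markov semigroup of the process killed upon exiting $E_1$, use transience on a finite state space to get $P_i(\tau<\infty)=1$ (the total occupation time of a finite set of transient states is finite a.s.), deduce $\|e^{Q_{E_1E_1}t}\|_\infty\to 0$, and conclude by the elementary spectral contradiction that every eigenvalue has strictly negative real part.

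The paper takes a different, purely algebraic route: it observes that all diagonal entries of $Q_{E_1E_1}$ are nonzero (an absorbing state would be recurrent), and that transience forces every communicating component of $E_1$ to have at least one row with strictly negative row sum (a ``leak'' to $E_2$), so that $-Q_{E_1E_1}$ is a weakly diagonally dominant Z-matrix with strict dominance in each component; it then invokes an $M$-matrix criterion from Plemmons (1977) to conclude stability and non-singularity. Your probabilistic/semigroup argument is self-contained and does not rely on an external $M$-matrix reference, at the price of using the killed-process identification and a limiting argument; the paper's argument is shorter and structural but outsources the final step. Both are standard and valid.
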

\begin{proof}
Note that no diagonal entries of $Q_{E_1E_1}$ are zero. Since all states are transient, then all communicating component of the matrix have a ``leak" (the row sum of $Q_{E_1E_1}$for a least one state in a component  is non-zero) and the result follows from \cite{PLEMMONS1977175}.
\end{proof}

The censored Markov chain on $E_2$ is defined as the process that has sample paths following the process $(X(t))_{t\ge 0}$  as long as it is in $E_2$, and ignoring the parts where $X(t)\in E_1$. This so-defined process is then again a Markov process, 
where under the assumption above, the transition rates of the watched Markov chain are given by
$$Q_{\text{watched}}:= Q_{E_2E_2}-Q_{E_2E_1}Q_{E_1E_1}^{-1}Q_{E_1E_2}.$$

\begin{remark}\label{connection_reduction}
In the above formula, the second summand catches the contribution from when  the original CTMC enters $E_1$  until it returns to $E_2$. By definition of the watched CTMC, we have
$$(Q_{E_2E_1}Q_{E_1E_1}^{-1}Q_{E_1E_2})_{v,w}=\sum_{x\in E_1}q_{v,x}P_{x}(X_{\tau}=w),$$
where $\tau=\inf\{t\geq0|X(t)\in E_2\}$.
\end{remark}

\subsection{Proof of Theorem \ref{main_thm_stand}}
\label{sect_proof_stand}

The proof consists of several parts. We first  show  without loss of generality, the following might be assumed:

\begin{itemize}
\item[-] the closed set $E$ has a particular form, 
\item[-] the initial distribution has support on a special part of the state space, 
\item[-] the RN $\mN$ is finite. 
\end{itemize}
Then we apply singularly perturbed CTMCs of $\S$ \ref{sing_pert} to the SRN. As a third step, we simplify by showing, the dynamics can be restricted to a smaller part of the state space. Finally, we connect the limit CTMC to watched CTMCs of $\S$ \ref{stoch_compl}, which shows how the reduced reactions are derived from the limit CTMC of the scaled SRNs.

\subsubsection{Preparations}
\label{compl_comparison}

In preparation for the proof, we consider the following. Let $\mN$ be an SRN with  non-interacting species set $\mU$, fast proper reactions $\mF\subseteq\mR_\mU$, and transition intensities $\lambda_r, r\in \mR$.
 For $x\in\Z^n_{\geq0}$, denote by $\mN(x)$ the set of reachable states from $x$ of $\mN$, and for  $z\in\Z^p_{\geq0 }$, denote by $\mNuf(z)$  the set of reachable states from $z$ of $\mNuf$.

We introduce a \emph{surrogate} model $\mathcal{M}$ that reflects the behavior of the SRN of $\mN$ when $\eps$ is very small. Specifically, this model is a CTMC on $\Z^n_{\geq 0}$ determined by the $Q$-matrix
$$Q(x,x+\xi):=\sum_{r\colon y \to y\p\in \mR\colon y\p-y=\xi}\tilde{\lam}_r(x),$$
with transition intensities,
$$\tilde{\lam}_r(x):=
\begin{cases}
\lam_r(x) & \text{ if $r\in \mF$},\\
  \1_{N_0}(x)\lam_r(x)& \text{ if $r\in \mR\setminus \mF$}.
\end{cases} $$
 Similarly, for $x\in\Z^n_{\geq0}$, we denote by $\mathcal{M}(x)$  the set of reachable states from $x$ of  $\mathcal{M}$. 

Let $N_i:=\{x=(z,u)\in \Z_{\geq 0}^n|\sum_{j=1}^m u_j=i\}$ for $i\geq 0$.
Then, we have the following.

\begin{proposition}\label{impl}
Suppose Assumption \ref{ass1} holds, that $\mF$ is proper, and let $x=(z,0)\in N_0$. Then,
$\rho_{\mO}(\mathcal{M}(x)\cap N_0))=\mNuf(z)$ and
$\mathcal{M}(x)\subseteq\mN(x)$
\end{proposition}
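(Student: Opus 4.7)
The proof plan is to handle the two statements separately, starting with the easier inclusion $\mathcal{M}(x)\subseteq\mN(x)$. Since the modified intensities satisfy $\tilde{\lam}_r(x')\le\lam_r(x')$ pointwise, any transition with positive rate under $\mathcal{M}$ has positive rate under $\mN$. Therefore every finite reaction sequence realisable in $\mathcal{M}$ is realisable in $\mN$, and reachability in $\mathcal{M}$ implies reachability in $\mN$.

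For the equality $\rho_\mO(\mathcal{M}(x)\cap N_0)=\mNuf(z)$, the plan hinges on a structural observation about trajectories of $\mathcal{M}$ started in $N_0$. Since $\mU$ is non-interacting, every complex contains at most one $\mU$-molecule, i.e.\ $\rho_\mU(y)\in\{0,e_1,\ldots,e_m\}$ for all $y\in\mC$. In $\mathcal{M}$, reactions in $\mR\setminus\mF$ only fire in $N_0$ by construction, while reactions in $\mF\subseteq\mR_\mU$ require a $\mU$-molecule in the reactant (by Assumption~\ref{ass1}). Combining these, starting from $x=(z,0)\in N_0$ the trajectory is confined to $N_0\cup N_1$: escape from $N_0$ is possible only through a reaction in $\mRb\setminus\mRa$ (which creates exactly one $\mU$-molecule), and inside $N_1$ only fast reactions fire, each consuming the single $\mU$-molecule present and producing zero or one new $\mU$-molecule. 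Consequently every trajectory decomposes into single jumps inside $N_0$ (reactions in $\mR\setminus\mRu$) and excursions $N_0\to N_1\to\cdots\to N_1\to N_0$ whose ordered reaction labels are precisely the labels of a walk $\w\in\mWu$ in $\mGuf$ from $\ast_{in}$ to $\ast_{out}$.

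With the decomposition in hand, $\rho_\mO(\mathcal{M}(x)\cap N_0)\subseteq\mNuf(z)$ follows by observing that each intra-$N_0$ jump corresponds to a reaction in $\mR\setminus\mRu\subseteq\mRfu$, while each excursion corresponding to $\w\in\mWu$ effects a net core-species change of $\rho_\mO(\mathrm{p}(\w)-\mathrm{r}(\w))=\mathrm{p}(\w)-\mathrm{r}(\w)$ (by Lemma~\ref{only_core} and the definitions), which is exactly the update of the reduced reaction $\mathrm{r}(\w)\to\mathrm{p}(\w)\in\mRfu$. For the converse, I would simulate each reduced reaction in $\mathcal{M}$: a reaction in $\mR\setminus\mRu$ is fired directly in $N_0$, and a reaction $\mathrm{r}(\w)\to\mathrm{p}(\w)$ is realised by firing $r_{L(\gamma_1)},\ldots,r_{L(\gamma_l)}$ along $\w=(\gamma_1,\ldots,\gamma_l)$. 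The applicability condition $z\ge\mathrm{r}(\w)$ (Corollary~\ref{comp_cor}) coincides, via Lemma~\ref{non-neg_min}, with positivity of every transition intensity along the walk in $\mathcal{M}$, so the simulation is feasible and ends in $N_0$ at the prescribed state. The main technical obstacle is establishing the structural observation that $\mathcal{M}$ never visits $N_i$ for $i\ge 2$, which is what makes the excursion-to-walk correspondence clean; once that is in place, the remaining steps are direct applications of Lemma~\ref{only_core}, Lemma~\ref{non-neg_min} and Corollary~\ref{comp_cor}.
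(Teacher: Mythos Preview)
The paper states this proposition without proof. Your argument is correct and complete: the inclusion $\mathcal{M}(x)\subseteq\mN(x)$ is immediate from $\tilde\lambda_r\le\lambda_r$, and your structural observation that trajectories of $\mathcal{M}$ started in $N_0$ never leave $N_0\cup N_1$ is the key to the equality. The walk--excursion correspondence you set up is precisely what the graph $\mGuf$ is designed to encode, and Lemmas~\ref{only_core}, \ref{non-neg_min} together with Corollary~\ref{comp_cor} supply the positivity and applicability equivalences you need in both directions. Your confinement-to-$N_0\cup N_1$ observation is also what the paper relies on (implicitly) in the subsequent state-space decomposition $\I_{S_1}=\Eu\cap N_0$, $\I_{F_1}=\Eu\cap N_1$ and the transition diagram of Lemma~\ref{lemma_sl_fst}, so your approach is fully in line with the paper's intentions.
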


For the proof of Theorem \ref{main_thm_stand}, 
the following Lemma will be useful. The proof follows by contradiction and is omitted.
\begin{lemma}\label{transient_+}
Suppose  Assumption \ref{ass1} holds and that $\mF$ is proper. If Assumption \ref{ass2} holds for some closed set $E$, then for any $x\in E\cap N_0$, the states in $\mathcal{M}(x)\cap N_1$ are transient for the CTMC of the SRN obtained by considering the RN defined by the reaction set  $\mF$ (and taking the same transition intensities as in $\mN$).
\end{lemma}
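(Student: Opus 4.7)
My plan is to argue by contradiction. Suppose some $\tilde{x} \in \mathcal{M}(x) \cap N_1$ is recurrent for the CTMC whose reactions are exactly those of $\mF$ (with intensities inherited from $\mN$); I will derive a contradiction with Assumption~\ref{ass2}. The first observation is structural: $N_0$ is absorbing for this $\mF$-only CTMC, because every reaction in $\mF \subseteq \mR_\mU$ has a non-interacting species in its reactant, so by Assumption~\ref{ass1} no reaction fires from $N_0$. If $\tilde{x}$ is recurrent, then with probability one it returns to itself, and hence it must satisfy $P_{\tilde{x}}(\text{reach }N_0) = 0$, since any visit to the absorbing set $N_0$ would preclude further returns.

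Next I locate $\tilde{x}$ on a fast chain. By construction of the surrogate $\mathcal{M}$, slow reactions $r\colon y \to y'\in \mRb\setminus\mRa$ fire only from $N_0$-states, so any state in $N_j$ with $j\ge 1$ reachable under $\mathcal{M}$ from $x$ lies on a fast chain initiated from some $(z',0) \in \mathcal{M}(x) \cap N_0$ by a slow reaction (landing at $x_1 := (z',0)+(y'-y)\in N_1$) followed by a sequence of fast reactions. Fix such a decomposition reaching $\tilde{x}$. Since $E$ is closed for the original SRN and $\mathcal{M}$ uses only a subset of its transitions, $\mathcal{M}(x) \subseteq E$, hence $z' \in E_0$ and Assumption~\ref{ass2} is available at $z'$.

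The argument now combines two facts: (i) $\tilde{x}$ is reachable from $x_1$ by $\mF$-reactions, and this reachability matches reachability in the $\mF$-only CTMC, so $P_{x_1}(\text{hit }\tilde{x}) > 0$; and (ii) Assumption~\ref{ass2} at $z'$ forces $P_{x_1}(\text{reach }N_0) = 1$. Applying the strong Markov property at the first hitting time of $\tilde{x}$ yields
\[
1 \;=\; P_{x_1}(\text{reach }N_0) \;\le\; \bigl(1 - P_{x_1}(\text{hit }\tilde{x})\bigr) + P_{x_1}(\text{hit }\tilde{x})\cdot P_{\tilde{x}}(\text{reach }N_0),
\]
which, together with $P_{x_1}(\text{hit }\tilde{x}) > 0$, forces $P_{\tilde{x}}(\text{reach }N_0) = 1$, contradicting the earlier conclusion. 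Hence $\tilde{x}$ is transient.

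The main obstacle I anticipate is fact~(ii): translating the algebraic identity in Assumption~\ref{ass2} into the probabilistic statement that the $\mF$-only jump chain from $x_1$ almost surely exits into $N_0$. Concretely, for each walk $\w$ starting with the slow reaction $r$ one has to identify $\Lambda_\w(z')/\lam_r(z',0)$ with the probability that the jump chain from $x_1$ follows the reactions encoded by $\w$, using Lemma~\ref{non-neg_min} to verify that all transitions along $\w$ have positive intensity at the intermediate states. Summing over all walks with $\gamma_1$ associated to $r$, the equality in Assumption~\ref{ass2} then forces the total exit probability to $N_0$ to be exactly $1$, ruling out both the blocked and the infinitely trapped scenarios.
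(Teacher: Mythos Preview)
Your proof is correct and follows exactly the approach the paper indicates: the paper states only that ``the proof follows by contradiction and is omitted,'' and you have supplied precisely that contradiction argument, correctly identifying that $N_0$ is absorbing for the $\mF$-only chain, that recurrence of $\tilde{x}$ would force $P_{\tilde{x}}(\text{reach }N_0)=0$, and that the probabilistic reading of $\Lambda_\w(z')/\lam_r(z',0)$ (which the paper uses explicitly in the proof of Proposition~\ref{suff_cond_fin_alt}) together with Assumption~\ref{ass2} yields $P_{x_1}(\text{reach }N_0)=1$. The only small addition worth making explicit is that the equality in Assumption~\ref{ass2} forces the exit probability to be $1$ for \emph{each} initiating reaction $r$ with $\lam_r(z',0)>0$ individually, since each partial sum $\sum_{\w:\gamma_1\sim r}\Lambda_\w(z')\le \lam_r(z',0)$ and the totals agree.
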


Consider the family of scaled CTMCs as in the setting  of $\S$ \ref{reduced_dynamics} with notation introduced in $\S$ \ref{compl_comparison}. Furthermore, consider a finite closed set $E\subseteq \Z_{\geq 0}^{n}$. As $E$ is finite, we can assume wlog that $\mN$ is finite.

For any probability distribution $\pi$ on $E$ and any state $w\in E$, $P_\pi(X(t)=w)=\sum_{x\in E}\pi_xP_x(X(t)=w)$. 
Therefore it is enough to prove the statement for:
\begin{itemize}
\item[-]  a closed set of the form $\tilde{E}:=\mN(x)$ with $x\in N_0\cap E$. This follows as $E=\cup_{x\p\in E\cap N_0}\mN(x)$ by definition.
\item[-] an initial distribution with support on $\mathcal{M}(x\p)\cap N_0$ for $x\p\in \tilde{E}\cap N_0$. This follows as by definition $\tilde{E}\cap N_0=(\cup_{x\p\in \tilde{E}\cap N_0}\mathcal{M}(x\p))\cap N_0$, see Proposition \ref{impl}. 
\end{itemize}

Hence, we might assume that $\mN$ is finite, that $E:=\mN(x)$ for some $x\in N_0$ and that $\pi$ has support on $\mathcal{M}(x\p)\cap N_0\subseteq E$ for $x\p\in E$. We denote $\Eu:=\mathcal{M}(x\p)$.

\subsubsection{The proof}

With the preparations above, the matrix $\widetilde{Q}$ from $\S$ \ref{sing_pert} is comprised of the transition intensities from reactions of $\mRf$ with a non-interacting species of $\mU$ in the reactant and $\widehat{Q}$ is comprised of the other reactions $\mR\setminus \mRf$, where we divide the states of $E$ as follows:
$$ 
\I_{S_1}=\Eu\cap N_0,\quad
\I_{S_2}=(E \cap N_0)\setminus \Eu,\quad
$$
$$
\I_{F_1}=\Eu\cap N_1,\quad
\I_{F_2}=E\setminus (N_0\cup (\Eu\cap N_1)),
$$
where by definition $E=\I_{S_1}\cup\I_{S_2}\cup\I_{F_1}\cup\I_{F_2}$.
We next illustrate the possible slow transitions (that is, transitions from $\widehat{Q}$) outgoing from $\I_{S_1}$ in blue on the left and all possible fast transitions (that is, transitions from $\widetilde{Q}$) displayed in red on the right by their transition state diagrams.
\begin{center}
\begin{tikzpicture}[->,>=stealth',shorten >=1pt,auto,node distance=1.8cm,
                    semithick, scale=0.6]
  \tikzstyle{every state}=[fill=white,draw=black,text=black]
    \tikzstyle{every edge}=[draw=blue]

  \node[state] (A)                    {$\I_{S_1}$};
  \node[state]         (B) [right of=A] {$\I_{S_2}$};
  \node[state]         (C) [below of=A] {$\I_{F_1}$};
  \node[state]         (D) [right of=C] {$\I_{F_2}$};

  \path (A) edge              node {} (C)
           (A)  edge       [loop above]       node {} (C);

          \tikzstyle{every edge}=[draw=red]
          
                            \node[state,draw=none] (A0)[right of=B]{};
                                      
                  \node[state] (A1)[right of=A0]{$\I_{S_1}$};
  \node[state]         (B1) [right of=A1] {$\I_{S_2}$};
  \node[state]         (C1) [below of=A1] {$\I_{F_1}$};
  \node[state]         (D1) [right of=C1] {$\I_{F_2}$};

  \path (C1) edge              node {} (A1)
           (C1)  edge       [loop below]       node {} (C1)
        (D1) edge node {} (B1)
            edge   [loop below]            node {} (D)
        (D1) edge              node {} (C1);

\end{tikzpicture}
\end{center}

These transition diagrams determine the zero/non-zero parts of the corresponding $Q$-matrices in block-form as follows.

\begin{lemma}\label{lemma_sl_fst} The following holds for the fast and slow parts of the  $Q$-matrices (notation as in  $\S$ \ref{sing_pert}) divided into blocks:
\begin{equation}\label{eq_big_Q_mat}
\widehat{Q}|_{\I_{S_1}\times S} =
\begin{pmatrix}
\widehat{Q}_{S_1S_1} &\widehat{Q}_{S_1S_2} &\widehat{Q}_{S_1F_1} &\widehat{Q}_{S_1F_2}  
\end{pmatrix}
=
\begin{pmatrix}
\widehat{Q}_{S_1S_1} &0 &\widehat{Q}_{S_1F_1} &0  
\end{pmatrix},
\end{equation}

\begin{equation*}
\widetilde{Q} =
\begin{pmatrix}
\widetilde{Q}_{S_1S_1} &\widetilde{Q}_{S_1S_2} &\widetilde{Q}_{S_1F_1} &\widetilde{Q}_{S_1F_2}  \\ 
\widetilde{Q}_{S_2S_1} &\widetilde{Q}_{S_2S_2} &\widetilde{Q}_{S_2F_1} &\widetilde{Q}_{S_2F_2}  \\
\widetilde{Q}_{F_1S_1} &\widetilde{Q}_{F_1S_2} &\widetilde{Q}_{F_1F_1} &\widetilde{Q}_{F_1F_2}  \\ 
\widetilde{Q}_{F_2S_1} &\widetilde{Q}_{F_2S_2} &\widetilde{Q}_{F_2F_1} &\widetilde{Q}_{F_2F_2}  
\end{pmatrix}=\begin{pmatrix}
0 &0 &0 &0  \\ 
0 &0 &0 &0  \\ 
\widetilde{Q}_{F_1S_1} &0 &\widetilde{Q}_{F_1F_1} &0 \\ 
\widetilde{Q}_{F_2S_1} &\widetilde{Q}_{F_2S_2} &\widetilde{Q}_{F_2F_1} &\widetilde{Q}_{F_2F_2}  
\end{pmatrix}
\end{equation*}
\end{lemma}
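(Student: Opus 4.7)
The strategy is a straightforward case analysis based on three structural facts: (i) fast reactions in $\mF \subseteq \mRa$ require a non-interacting species in the reactant, so by Assumption~\ref{ass1} they cannot fire from states in $N_0$; (ii) because species in $\mU$ have stoichiometric coefficient $0$ or $1$ and never co-occur in a complex, any single reaction changes the total count of $\mU$-molecules by at most one in either direction, so $N_i \to N_{i-1} \cup N_i \cup N_{i+1}$; (iii) by construction $\Eu = \mathcal{M}(x')$ is closed under the surrogate dynamics $\mathcal{M}$, which includes all reactions of $\mF$ and all reactions of $\mR \setminus \mF$ fired from states in $N_0$.

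For the slow block $\widehat{Q}|_{\I_{S_1} \times S}$, I would take $x = (z,0) \in \I_{S_1} \subseteq \Eu \cap N_0$ and a slow reaction $r\colon y \to y' \in \mR \setminus \mF$ with $\lam_r(x) > 0$. By Assumption~\ref{ass1} we need $x \geq y$, which forces $\rho_\mU(y) = 0$, i.e.\ $r \in \mR \setminus \mRa$. Then $\rho_\mU(y')$ is either $0$ (so the target is in $N_0$) or a single standard basis vector (so the target is in $N_1$). In either case, since $x \in N_0$, the reaction $r$ is active in $\mathcal{M}$ with the same rate, so the target lies in $\mathcal{M}(x') = \Eu$. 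Thus targets land in $\Eu \cap N_0 = \I_{S_1}$ or $\Eu \cap N_1 = \I_{F_1}$, yielding $\widehat{Q}_{S_1 S_2} = 0$ and $\widehat{Q}_{S_1 F_2} = 0$.

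For the fast block $\widetilde{Q}$, I would argue row by row. Rows indexed by $x \in N_0 = \I_{S_1} \cup \I_{S_2}$ are identically zero by (i). For a row indexed by $x \in \I_{F_1} \subseteq \Eu \cap N_1$ and a fast reaction $r \in \mF$ firing at $x$: since $r \in \mRa$ consumes one $\mU$-molecule, the target lies in $N_0$ if $\rho_\mU(y') = 0$ and in $N_1$ if $\rho_\mU(y')$ is a basis vector, by (ii). All fast reactions belong to $\mathcal{M}$, so the target stays in $\Eu$, hence in $\Eu \cap N_0 = \I_{S_1}$ or $\Eu \cap N_1 = \I_{F_1}$, giving $\widetilde{Q}_{F_1 S_2} = 0$ and $\widetilde{Q}_{F_1 F_2} = 0$. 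Rows indexed by $\I_{F_2}$ carry no zero claim, so there is nothing more to verify.

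The only subtle point is the bookkeeping around $\Eu$: one must remember that $\Eu$ is reachability under the surrogate model $\mathcal{M}$ (not under $\mN$ or $\mNuf$), and that any single reaction considered above is in fact an active transition of $\mathcal{M}$ from the relevant state. Everything else reduces to the stoichiometric coefficient constraint on $\mU$ and to Assumption~\ref{ass1}, both of which are structural and require no calculation.
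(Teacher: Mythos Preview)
Your argument is correct and follows essentially the same route as the paper: the paper's proof merely says that the slow claims hold ``by contradiction'' and the fast claims hold ``by Assumption~\ref{ass1}'' together with the definitions of $\I_{F_1},\I_{S_2},\I_{F_2}$, whereas you spell out precisely the three structural facts (Assumption~\ref{ass1} on $N_0$, the $\pm 1$ change in the $\mU$-count forced by non-interaction, and closure of $\Eu=\mathcal{M}(x')$ under the surrogate dynamics) that make those one-line assertions work. No different idea is used.
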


\begin{proof}
Recall that the slow transitions come from $\mR\setminus \mRf$ and the fast ones from $ \mRf$ by assumption. We go through the slow transitions and then the fast ones.
\begin{itemize}
\item[-] By contradiction there are no slow transitions from $\I_{S_1}$ to $\I_{S_2}$ or $\I_{F_2}$. 
\item[-]  By Assumption \ref{ass1}, $\widetilde{Q}$ has zero entries on $E_0=\I_{S_1}\cup\I_{S_2}$ (the coordinates of non-interacting species are zero). 
By the definition of the sets $\I_{F_1},\I_{S_2}$ and $\I_{F_2}$ there are no fast transitions from $\I_{F_1}$ to $\I_{S_2}$ or $\I_{F_2}$.
\end{itemize}
\end{proof}
Following $\S$ \ref{sing_pert} (see also \cite{yin2012continuous}), the distribution of the CTMC converges to the zeroth order outer expansion $\varphi_0(t)$ for $t\geq 0$, which we consider as 
$$\varphi_0(t):=(\varphi_{S_1}(t),\varphi_{S_2}(t),\varphi_{F_1}(t),\varphi_{F_2}(t)).$$
By assumption, the initial distribution $\pi_0$ has support contained in $\I_{S_1}$. 
We next go through the roles of the states in $\I_{S_1},\I_{S_2},\I_{F_1},\I_{F_2}$.

States in $\I_{F_1}$ are transient in $\widetilde{Q}$ by Assumption \ref{ass2} and Lemma \ref{transient_+}, and $\widetilde{Q}|_{\I_{F_1}\times \I_{F_1}}$ is non-singular (see Lemma \ref{trans_stable_non}). States in $\I_{S_1},\I_{S_2}$ are absorbing in $\widetilde{Q}$. States in $\I_{F_2}$ can be transient, absorbing or part of a closed communicating class in $\widetilde{Q}$. Correspondingly, $\varphi_0(t)$ is zero on $\I_{F_1}$ and on the transient part of $\I_{F_2}$.

Next, we make the following general observation.
\begin{remark}\label{poss_red}
Let $(X(t))_{t\ge 0}$ be a CTMC with generator $Q$ on a state space $S$. Let $\pi_0$ be a probability distribution with $\supp(\pi)\subseteq Z\subseteq S$, where $|Z|<\infty$. Assume that there are no $x\in Z,y\in S\setminus Z$ such that $x\to y$. Then, $Q|_{Z\times Z}$ is a $Q$-matrix,  and denoting by $X^{restr}$ the corresponding CTMC, we have 
$$P_{\pi_0}(X(t)\in B)=P_{\pi_0}(X^{restr}(t)\in B),\quad \text{ for }B\subseteq S,\quad t\geq 0.$$
Then, it is enough to consider the restricted $Q$-matrix $Q|_{A\times A}$ and $X^{restr}$ for computations of such probabilities.
\end{remark}

\begin{lemma}
The CTMC of the generator $\overline{Q}_*$ (see \eqref{eq_gen_red} of $\S$ \ref{sing_pert}) in our setting has the property of Remark \ref{poss_red} with $Z:=\I_{S_1}=\Eu\cap N_0$.
\end{lemma}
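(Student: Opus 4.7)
The goal is to verify the hypothesis of Remark \ref{poss_red} with $Z = \I_{S_1}$: since $\pi$ has support in $\I_{S_1}$ and $\I_{S_1}$ is finite, it suffices to show that every row of $\overline{Q}_*$ indexed by a state $x\in\I_{S_1}$ has all its entries outside $\I_{S_1}$ equal to zero. Recall that the state space of $\overline{Q}_*$ is $\I_A \cup \{s_1,\ldots,s_l\}$, where $\I_A \supseteq \I_{S_1}\cup\I_{S_2}$ consists of the $\widetilde{Q}$-absorbing states (possibly including absorbing states in $\I_{F_2}$) and each $s_i$ represents one of the $\widetilde{Q}$-closed irreducible classes sitting inside $\I_W\subseteq\I_{F_2}$.

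First, I would dispose of the direct contribution of $\widehat{Q}$ to the $x$-row. By Lemma \ref{lemma_sl_fst} (display \eqref{eq_big_Q_mat}), $\widehat{Q}_{S_1S_2}=0$ and $\widehat{Q}_{S_1F_2}=0$, so $\widehat{Q}_{x,y}$ can only be non-zero for $y\in\I_{S_1}\cup\I_{F_1}$. In particular the direct term already vanishes on $(\I_A\setminus\I_{S_1})\cup\I_W$.

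Next, I would handle the correction term $\widehat{Q}_{x,T}\widetilde{Q}_{TT}^{-1}(\widetilde{Q}_{TA},\widetilde{Q}_{TW})$ in \eqref{eq_gen_red}. Since $\widehat{Q}_{x,F_2}=0$, the vector $\widehat{Q}_{x,T}$ is supported on $\I_{F_1}\subseteq\I_T$. By the watched-chain interpretation in Remark \ref{connection_reduction}, for $t\in\I_{F_1}$ the entries $(\widetilde{Q}_{TT}^{-1}\widetilde{Q}_{T\cdot})_{t,y}$ encode (up to sign) the probability that the fast CTMC started at $t$ first exits $\I_T$ at $y\in\I_A\cup\I_W$. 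The key observation is that $\Eu=\mathcal{M}(x')$ is closed under fast reactions: by construction of the surrogate $\mathcal{M}$, only fast reactions are used outside $N_0$, so fast transitions from any state of $\Eu$ remain in $\Eu$. Hence the fast chain started at $t\in\I_{F_1}\subseteq\Eu\cap N_1$ stays forever in $\Eu$, and by Lemma \ref{transient_+} together with Assumption \ref{ass2} (and the finiteness of $\Eu$) the only $\widetilde{Q}$-recurrent states inside $\Eu$ are the absorbing states $\Eu\cap N_0=\I_{S_1}$. Thus the first $\widetilde{Q}$-exit distribution from $\I_{F_1}$ is supported entirely on $\I_{S_1}$, and the correction-term entries for $y\notin\I_{S_1}$ vanish. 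Together with the direct-term analysis this finishes the proof.

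The main subtlety, as I see it, is to argue cleanly that the fast chain started in $\I_{F_1}$ cannot hit any closed irreducible class in $\I_W$ nor any absorbing state of $\widetilde{Q}$ that lies in $\I_{F_2}\cap\I_A$; both possibilities are ruled out by the closedness of $\Eu$ under fast reactions combined with the identification $\Eu\cap N_0 = \I_{S_1}$. Once this is in place, the remainder is bookkeeping against the block structure of Lemma \ref{lemma_sl_fst}.
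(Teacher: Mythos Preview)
Your argument is correct. The paper's proof reaches the same conclusion but by a purely algebraic route: it decomposes $\I_{F_2}$ into its transient, irreducible and absorbing parts, writes $A=\I_{S_1}\cup\I_{S_2}\cup F_{2,a}$, $T=\I_{F_1}\cup F_{2,t}$, $W=F_{2,w}$, and then evaluates the middle matrix of \eqref{eq_gen_red} blockwise. Using the zeros recorded in Lemma~\ref{lemma_sl_fst} (in particular $\widetilde{Q}_{F_1F_2}=0$, $\widetilde{Q}_{F_1S_2}=0$, $\widehat{Q}_{S_1F_2}=0$, $\widehat{Q}_{S_1S_2}=0$) it computes the block inverse of $\widetilde{Q}_{TT}$ and the block products explicitly, obtaining that the $\I_{S_1}$-row of the correction term reduces to $(\widehat{Q}_{S_1F_1}\widetilde{Q}_{F_1F_1}^{-1}\widetilde{Q}_{F_1S_1},0,0,0)$. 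Your route instead invokes the exit-distribution interpretation of $-\widetilde{Q}_{TT}^{-1}(\widetilde{Q}_{TA},\widetilde{Q}_{TW})$ and the closedness of $\Eu$ under the fast dynamics; this is exactly the probabilistic content of the block-zeros in Lemma~\ref{lemma_sl_fst}, so the two proofs rest on the same facts. Your version is more conceptual and avoids the matrix bookkeeping; the paper's version has the advantage of yielding the explicit formula $\overline{Q}_*|_{\I_{S_1}\times\I_{S_1}}=\widehat{Q}_{S_1S_1}-\widehat{Q}_{S_1F_1}\widetilde{Q}_{F_1F_1}^{-1}\widetilde{Q}_{F_1S_1}$ that is used immediately afterwards (Lemma~\ref{lem_simpli} and the identification with the watched chain). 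One small point you leave implicit: the fast chain started in $\I_{F_1}\subseteq N_1$ not only stays in $\Eu$ but also stays in $N_0\cup N_1$ (fast reactions lie in $\mR_\mU$, and non-interacting species have stoichiometry at most one), so $\Eu\cap(\I_A\cup\I_W)=\I_{S_1}$ indeed exhausts the possible exit points.
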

\begin{proof}
First, we decompose $\I_{F_2}$ into three different parts $F_{2,t},F_{2,w}$ and $F_{2,a}$ according to whether a state is transient, part of a (non-trivial) irreducible component or absorbing in $\widetilde{Q}$. Then, writing 
\begin{equation}\label{eq_deco}
A=\I_{S_1}\cup\I_{S_2}\cup F_{2,a}, \quad T=\I_{F_1}\cup F_{2,t},\quad W=F_{2,w},
\end{equation} corresponds to the situation of $\S$ \ref{sing_pert}.

Let $S\p$ be the reduced state space of $\overline{Q}_*$ (cf.,  $\S$ \ref{sing_pert}). Clearly $A\subseteq S\p$, as these are the absorbing states in $\widetilde{Q}$, and at least the transient states have been eliminated from $S\p$. Hence $\I_{S_1}\subseteq S\p$. We next focus on $\overline{Q}_*|_{\I_{S_1}\times S\p}$, which contains  the possible outgoing transitions from $\I_{S_1}$. It is enough to show that $\overline{Q}_*|_{\I_{S_1}\times S\p\setminus\I_{S_1}}$ has only zero entries. For this, it suffices to consider the matrices in the middle of  \eqref{eq_gen_red}, which we recall here, 

\begin{equation}\label{matrix_res}
\bigg (
\begin{pmatrix}
\widehat{Q}_{AA} & \widehat{Q}_{AW}  \\
\widehat{Q}_{WA} & \widehat{Q}_{WW} 
\end{pmatrix}+
\begin{pmatrix}
\widehat{Q}_{AT}   \\
\widehat{Q}_{WT}  
\end{pmatrix}
\widetilde{Q}^{-1}_{TT} 
\begin{pmatrix}
\widetilde{Q}_{TA}   & \widetilde{Q}_{TW}  
\end{pmatrix}
\bigg).
\end{equation}
We next check the right summand of  \eqref{matrix_res}, which we denote
$$R=\begin{pmatrix}
\widehat{Q}_{AT}   \\
\widehat{Q}_{WT}  
\end{pmatrix}
\widetilde{Q}^{-1}_{TT} 
\begin{pmatrix}
\widetilde{Q}_{TA}   & \widetilde{Q}_{TW}  
\end{pmatrix}=\begin{pmatrix}
\widehat{Q}_{AT}\widetilde{Q}^{-1}_{TT} \widetilde{Q}_{TA} &\widehat{Q}_{AT}\widetilde{Q}^{-1}_{TT} \widetilde{Q}_{TW}  \\
\widehat{Q}_{WT}\widetilde{Q}^{-1}_{TT} \widetilde{Q}_{TA} &\widehat{Q}_{WT}\widetilde{Q}^{-1}_{TT} \widetilde{Q}_{TW}
\end{pmatrix}.$$
Then, consider the restriction to $R|_{\I_{S_1}\times S\p}$. Keeping the decomposition of $A,T,W$ in \eqref{eq_deco} in mind, the following hold for the involved matrices $\widehat{Q}_{AT},$ $\widetilde{Q}^{-1}_{TT},$ $\widetilde{Q}_{TA},$  $\widetilde{Q}_{TW}$ by Lemma \ref{lemma_sl_fst}:
$$\widetilde{Q}_{TT}^{-1} =
\begin{pmatrix}
\widetilde{Q}_{F_1F_1} & 0 \\ \ast & \ast
\end{pmatrix}^{-1}=\begin{pmatrix}
\widetilde{Q}_{F_1F_1}^{-1} & 0 \\ \ast &\ast
\end{pmatrix},\quad
\widetilde{Q}_{TW}=\begin{pmatrix}
\widetilde{Q}_{F_1F_{2,w}} \\ \widetilde{Q}_{F_{2,t}F_{2,w}}\end{pmatrix}=
\begin{pmatrix}
0\\ \ast 
\end{pmatrix},
$$

$$ 
 \widehat{Q}_{AT}=\begin{pmatrix}
\widehat{Q}_{S_1F_1} & 0\\ \ast &\ast\\
\ast &\ast
\end{pmatrix},\quad 
 \widetilde{Q}_{TA}=\begin{pmatrix}
\widetilde{Q}_{F_1S_1} & 0&0\\ \ast &\ast
&\ast
\end{pmatrix},
$$
and we get that
\begin{align*}R|_{S_1\times S\setminus T}=& \begin{pmatrix} R_{S_1S_1}&R_{S_1S_2}&R_{S_1F_{2,a}}&R_{S_1S_{2,w}}\end{pmatrix}\\
&\begin{pmatrix} \widehat{Q}_{S_1F_1}\widetilde{Q}_{F_1F_1}^{-1}\widetilde{Q}_{F_1S_1}&0&0&0\end{pmatrix} \stepcounter{equation}\tag{\theequation}\label{matsum1}\\
\end{align*}
Hence, the property holds for $R$, that is, the right hand side of \eqref{matrix_res} has the required property.

Next we look at the left hand side of  \eqref{matrix_res}, and again restrict to $\I_{S_1}\times S\setminus T$. By Lemma \ref{lemma_sl_fst}, the slow part has only non-zero transitions to $\I_{S_1}$ or $\I_{F_1}$, hence also this matrix has the required property. As this holds for both summands of    \eqref{matrix_res}, we have shown that the property holds for $\overline{Q}_*$.
\end{proof}

Finally, noting the form of equation \eqref{eq_big_Q_mat} and equation \eqref{matsum1} we see that only the slow and fast parts outgoing from $\I_{F_1}$ and $\I_{S_1}$ contribute to $\overline{Q}_*|_{\I_{S_1}\times \I_{S_1}}$. Then we can compute $\overline{Q}_*$ for the state space $\I_{F_1}\cup \I_{S_1}$ (i.e. with restricted $Q_\eps,\widetilde{Q},\widehat{Q}$) or for the state space  $\I_{S_1}\cup\I_{S_2}\cup\I_{F_1}\cup\I_{F_2}$. However, in both cases the expression we get for $\overline{Q}_*|_{\I_{S_1}\times \I_{S_1}}$ is the same, hence the following holds.

\begin{lemma}\label{lem_simpli}
If the initial distribution has support on $\I_{S_1}$, for the computation of $\varphi_0(t)$, it is enough to restrict the state space of $Q_\eps$ to $\I_{F_1}\cup \I_{S_1}$.
\end{lemma}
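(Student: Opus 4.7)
The plan is to show that the block $\overline{Q}_*|_{\I_{S_1}\times\I_{S_1}}$ — which, by the preceding lemma together with Remark \ref{poss_red}, is the only piece of $\overline{Q}_*$ that drives $\varphi_0(t)$ when $\pi_0$ is supported on $\I_{S_1}$ — depends only on the sub-blocks of $\widetilde{Q}$ and $\widehat{Q}$ indexed by $\I_{S_1}\cup\I_{F_1}$, and then to verify that the same matrix arises when one restricts the ambient state space to $\I_{S_1}\cup\I_{F_1}$ from the outset.

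First I would read off the $\I_{S_1}$-row from the block form \eqref{eq_big_Q_mat} of $\widehat{Q}$ and from the identity \eqref{matsum1} for $R$, whereupon the general formula \eqref{eq_gen_red} for $\overline{Q}_*$ collapses on this row to
\[
\overline{Q}_*|_{\I_{S_1}\times\I_{S_1}} \;=\; \widehat{Q}_{S_1S_1} \;+\; \widehat{Q}_{S_1F_1}\,\widetilde{Q}_{F_1F_1}^{-1}\,\widetilde{Q}_{F_1S_1},
\]
with zero entries in all remaining columns of the $\I_{S_1}$-row. The non-trivial content embedded in \eqref{matsum1} is that every slow outflow from $\I_{S_1}$ either terminates immediately inside $\I_{S_1}$ or passes through $\I_{F_1}$ and, by Assumption \ref{ass2} together with Lemma \ref{transient_+}, exits $\I_{F_1}$ back into $\I_{S_1}$ before any further slow transition occurs; in particular, contributions from $\I_{S_2}$, $F_{2,t}$, $F_{2,a}$ and $F_{2,w}$ never feed back into the $\I_{S_1}$-row.

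Next I would repeat the singular-perturbation recipe of $\S$\ref{sing_pert} on the CTMC whose generator is $Q^\eps$ restricted to the smaller state space $\I_{S_1}\cup\I_{F_1}$. Here the fast absorbing set is exactly $\I_{S_1}$, the fast transient set is $\I_{F_1}$ (again by Assumption \ref{ass2} and Lemma \ref{transient_+}, so that $\widetilde{Q}_{F_1F_1}$ is non-singular by Lemma \ref{trans_stable_non}), and there is no $W$-component. Applying \eqref{eq_gen_red} in this simpler setting reproduces exactly the same expression $\widehat{Q}_{S_1S_1}+\widehat{Q}_{S_1F_1}\widetilde{Q}_{F_1F_1}^{-1}\widetilde{Q}_{F_1S_1}$ for the reduced generator.

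Combining these two computations closes the argument: since $\overline{Q}_*$ satisfies the hypothesis of Remark \ref{poss_red} with $Z=\I_{S_1}$, the solution of \eqref{eq_ODE_general} started at $\pi_0$ remains supported on $\I_{S_1}$ and evolves by $e^{t\,\overline{Q}_*|_{\I_{S_1}\times\I_{S_1}}}$, and this matrix exponential is identical in the two frameworks. The only place where I expect any mild care is in making sure that the block-lower-triangular structure of $\widetilde{Q}_{TT}$ — specifically the vanishing of the $F_1F_{2,w}$-corner — is enough to kill the $F_{2,w}$-contribution in \eqref{matsum1}; but this is already recorded in the zero pattern used to derive \eqref{matsum1}, so no additional computation is needed.
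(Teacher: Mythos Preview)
Your proposal is correct and follows essentially the same route as the paper: the paper's argument is the short paragraph immediately preceding the lemma, which invokes \eqref{eq_big_Q_mat} and \eqref{matsum1} to conclude that only the $\I_{S_1}$- and $\I_{F_1}$-indexed blocks contribute to $\overline{Q}_*|_{\I_{S_1}\times\I_{S_1}}$, and that this block is therefore the same whether computed on the full state space or on the restriction $\I_{S_1}\cup\I_{F_1}$. Your write-up makes this explicit by actually displaying the resulting formula $\widehat{Q}_{S_1S_1}+\widehat{Q}_{S_1F_1}\widetilde{Q}_{F_1F_1}^{-1}\widetilde{Q}_{F_1S_1}$ (which is exactly what the paper then uses in \eqref{eq_ODE}) and by spelling out the appeal to Remark~\ref{poss_red} and the preceding lemma, but the logical structure is identical.
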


We treat the restriction of the state space to $\I_{F_1}\cup \I_{S_1}$ in the following. Restricting to $\I_{F_1}\cup \I_{S_1}$, we have
\begin{equation*}
\widetilde{Q} =
\begin{pmatrix}
\widetilde{Q}_{S_1S_1} & \widetilde{Q}_{S_1F_1} \\ \widetilde{Q}_{F_1S_1} & \widetilde{Q}_{F_1F_1}
\end{pmatrix}=\begin{pmatrix}
0 & 0 \\ \widetilde{Q}_{F_1S_1} & \widetilde{Q}_{F_1F_1}
\end{pmatrix},\;\;\;
\widehat{Q} =
\begin{pmatrix}
\widehat{Q}_{S_1S_1} & \widehat{Q}_{S_1F_1} \\ \widehat{Q}_{F_1S_1} & \widehat{Q}_{F_1F_1}
\end{pmatrix}.
\end{equation*}
where the states $\I_{S_1}$ are absorbing in $\widetilde{Q}$, and the states of $\I_{F_1}$ are transient states. Note that this setting corresponds to the situation of $\S$ \ref{sing_pert} where we removed the irreducible part (that is, $\I_W$ in the notation of $\S$ \ref{sing_pert}), $\I_{F_1}$  corresponds to $T$ and $\I_{S_1}$  corresponds to $A$ of $\S$ \ref{sing_pert}.

The scaled CTMC converges to the zeroth order outer expansion $\varphi_0(t)$ for $t> 0$, giving
$\varphi_0(t):=(\varphi_{S_1}(t),\varphi_{F_1}(t))=(\varphi_{S_1}(t),0_{F_1})$ which satisfies the ODE from $\S$ \ref{sing_pert} with 
\begin{equation}\label{eq_ODE}\left\{
\begin{split}
\dot{\varphi}_{S_1}(t) &= \varphi_{S_1}(t)(\widehat{Q}_{S_1S_1}+\widehat{Q}_{S_1F_1}(\widetilde{Q}_{F_1F_1})^{-1}\widetilde{Q}_{F_1S_1}), \\
\varphi_{S_1}(0) &= p_{S_1} + p_{F_1}(\widetilde{Q}_{F_1F_1})^{-1}\widetilde{Q}_{F_1S_1} = Pr(\pi_0),
\end{split}\right.
\end{equation}
with initial distribution $\pi_0=(p_{S_1},p_{F_1})$. 
As the initial distribution has support on $\I_{S_1}$, we get the following from Proposition \ref{prop_main1}, where again $X_\eps(t)$ is the CTMC associated to $Q_\eps$ while $X_0(t)$ is the CTMC from $C$ (corresponding to $\overline{Q}_\ast$ from $\S$ \ref{sing_pert}).

\begin{lemma}
Let $\pi_0$ be a probability distribution with support on $\I_{S_1}$, and $B\subseteq \I_{S_1}$. Then the following holds for all $T>0$:
$$\sup_{t\in[0,T]}|P_{\pi_0}(X_\eps(t)\in B)-P_{Pr(\pi_0)}(X_0(t)\in B)|=O(\epsilon)\quad\text{for }\epsilon\to 0.$$
\end{lemma}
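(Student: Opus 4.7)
The plan is to verify that this lemma is a direct application of Proposition \ref{prop_main1} in the special case where the state space has been restricted to $\I_{F_1}\cup\I_{S_1}$. First I would observe that, in the notation of $\S$ \ref{sing_pert}, this restricted state space decomposes as $\I_A = \I_{S_1}$, $\I_T = \I_{F_1}$, and $\I_W = \emptyset$; indeed, the states of $\I_{S_1}$ are absorbing for $\widetilde{Q}$ by construction (they lie in $N_0$, so Assumption \ref{ass1} forces the outgoing fast transition rates to vanish), while the states of $\I_{F_1}$ are transient for $\widetilde{Q}$ by Lemma \ref{transient_+} combined with Assumption \ref{ass2}. In particular, $\widetilde{Q}_{F_1F_1}$ is non-singular by Lemma \ref{trans_stable_non}, which guarantees that the expressions appearing in \eqref{eq_ODE} and in the reduced generator make sense.

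Next I would check that $\overline{Q}_\ast$ as defined in \eqref{eq_gen_red} reduces, in this restricted setting, to the matrix $C := \widehat{Q}_{S_1S_1}+\widehat{Q}_{S_1F_1}(\widetilde{Q}_{F_1F_1})^{-1}\widetilde{Q}_{F_1S_1}$ appearing in \eqref{eq_ODE}. Since $\I_W=\emptyset$, the block-matrix factors involving $\nu^i$ and $\widetilde{\1}$ collapse to identity blocks of size $|\I_A|=|\I_{S_1}|$, and the formula \eqref{eq_gen_red} reduces to precisely $C$. The ODE \eqref{eq_ODE} is then the system governing the zeroth order outer expansion, with the stated initial condition equal to $\mathrm{Pr}(\pi_0)$.

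The hypotheses of Proposition \ref{prop_main1} are now in place: the initial distribution $\pi_0$ has support contained in $\I_{S_1}=\I_A$, and $B\subseteq\I_{S_1}=\I_A$. Invoking Proposition \ref{prop_main1} directly yields
\[
\sup_{t\in[0,T]}\bigl|P_{\pi_0}(X_\eps(t)\in B)-P_{\mathrm{Pr}(\pi_0)}(X_0(t)\in B)\bigr|=O(\eps)\quad\text{as }\eps\to 0,
\]
where $X_0(t)$ is the CTMC generated by $C$. A final remark to include is that the restriction to $\I_{F_1}\cup\I_{S_1}$ performed in Lemma \ref{lem_simpli} does not affect the probabilities appearing on the left-hand side: since the initial distribution lives in $\I_{S_1}$ and the slow component $\widehat{Q}_{S_1\cdot}$ has no transitions out of $\I_{F_1}\cup\I_{S_1}$ (by \eqref{eq_big_Q_mat} and Lemma \ref{lemma_sl_fst}), Remark \ref{poss_red} applies and the restricted CTMC coincides in law with $X_\eps$ on the relevant events.

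The main obstacle, if any, is purely bookkeeping: one must carefully match the block decomposition used locally (with blocks $\I_{S_1},\I_{S_2},\I_{F_1},\I_{F_2}$, further refined via $F_{2,t},F_{2,w},F_{2,a}$) to the standard $\I_A,\I_W,\I_T$ decomposition of \cite{yin2012continuous} and verify the initial-layer correction vanishes; but since the initial distribution is concentrated on absorbing states of $\widetilde{Q}$, the initial layer is indeed zero, so only the zeroth order outer expansion contributes and Proposition \ref{prop_main1} applies verbatim.
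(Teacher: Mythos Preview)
Your overall identification of the lemma as an instance of Proposition \ref{prop_main1} is correct, and your verification that $\I_A=\I_{S_1}$, $\I_T=\I_{F_1}$, $\I_W=\emptyset$ and that $\overline{Q}_\ast$ collapses to $C=\widehat{Q}_{S_1S_1}+\widehat{Q}_{S_1F_1}(\widetilde{Q}_{F_1F_1})^{-1}\widetilde{Q}_{F_1S_1}$ is fine. The gap is in your ``final remark'': you claim that Remark \ref{poss_red} lets you replace the \emph{full} process $X_\eps$ by the process restricted to $\I_{F_1}\cup\I_{S_1}$, but this is not justified. You only check that $\widehat{Q}_{S_1\cdot}$ has no transitions out of $\I_{F_1}\cup\I_{S_1}$; however, Remark \ref{poss_red} requires that \emph{no} state of $Z=\I_{F_1}\cup\I_{S_1}$ has a transition to $E\setminus Z$. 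Slow transitions out of $\I_{F_1}$ (the block $\widehat{Q}_{F_1\cdot}$) are never analyzed in Lemma \ref{lemma_sl_fst}, and in general they \emph{can} lead to $\I_{F_2}$ or $\I_{S_2}$ --- for instance, a slow reaction in $\mRb\setminus\mRa$ fired from a state in $\I_{F_1}$ produces a second non-interacting molecule and lands in $\I_{F_2}$. Hence $\I_{F_1}\cup\I_{S_1}$ is not closed for $Q^\eps$, and the restricted CTMC does not coincide in law with $X_\eps$.

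The paper avoids this by applying Proposition \ref{prop_main1} on the \emph{full} state space $E$ (with its absorbing, transient, and possibly irreducible parts), which directly controls $P_{\pi_0}(X_\eps(t)\in B)$ for the full process. The restriction to $\I_{F_1}\cup\I_{S_1}$ enters only through Lemma \ref{lem_simpli} and the lemma preceding it: these show that the \emph{limiting} generator $\overline{Q}_\ast$ has no transitions out of $\I_{S_1}$ and that $\overline{Q}_\ast|_{\I_{S_1}\times\I_{S_1}}$ coincides with the $C$ one would compute from the restricted data. In other words, the restriction is applied to $X_0$ (via Remark \ref{poss_red} on $\overline{Q}_\ast$), not to $X_\eps$. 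Reordering your argument this way closes the gap without additional estimates.
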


Finally, observing the form of the matrix $\widehat{Q}_{S_1S_1}+\widehat{Q}_{S_1F_1}(\widetilde{Q}_{F_1F_1})^{-1}\widetilde{Q}_{F_1S_1}$ in  \eqref{eq_ODE}, we can view it as the watched Markov chain of $\S$ \ref{stoch_compl}, where it is watched when in $\I_{S_1}$ with $Q$-matrix
\begin{equation*}
Q =
\begin{pmatrix}
\widehat{Q}_{S_1S_1} & \widehat{Q}_{S_1F_1} \\ \widetilde{Q}_{F_1S_1} & \widetilde{Q}_{F_1F_1}
\end{pmatrix}.
\end{equation*}
By the interpretation of the censored Markov chain available from Remark \ref{connection_reduction}, the part $\widehat{Q}_{S_1F_1}(\widetilde{Q}_{F_1F_1})^{-1}\widetilde{Q}_{F_1S_1}$ corresponds to transition intensities given by the rates entering a state in $\I_{F_1}$ from $\I_{S_1}$ times the exit probabilities to $\I_{S_1}$. As $\widetilde{Q}_{F_1S_1},\widetilde{Q}_{F_1F_1}$ have the transition intensities from the reactions $\mF$ and $\widehat{Q}_{S_1F_1}$ from $\mR\setminus\mF$, this corresponds exactly to the transition rates of the defined reduced RN of $\S$ \ref{subs_red_stoch}. Hence Theorem \ref{main_thm_stand} follows.

\section{Proof of Theorem \ref{main_thm_stand} for non-homogeneous SRNs}\label{proof_nonhom}

The proof follows the same steps as the proof of Theorem \ref{main_thm_stand}, hence it is enough to check that Assumptions \ref{ass2} and \ref{ass4} together with $\mF$ being proper are  sufficient for the assumptions made in \cite{yin2012continuous} to hold.
Recall that since the $\lam_{y \to y\p}(t,\cdot)$ satisfy Assumption \ref{ass1} for each $t\in [0,T]$, the state space decomposition of the ordinary Markov process for an RN and $Q_t$ for $t$ fix agree. Hence, by Assumption \ref{ass2} and Lemma \ref{transient_+},
any $x\in E$ with one molecule of a non-interacting species is transient in $\widehat{Q}_t$. Furthermore by Assumption \ref{ass4}, $\widetilde{Q}_t,\widehat{Q}_t$ are once continuous differentiable with Lipschitz derivative (\cite[Assumption A4.4(respectively,  A4.5)]{yin2012continuous}). With these observations we can conclude.

 \bibliographystyle{plain} 
    
 \bibliography{references}

\end{document}